\newcommand{\kk}             {{\mathbb K}}
\newcommand{\into}          {\rightarrow}
\newcommand{\tens}         {\otimes}
\newcommand{\fhi}            {\varphi}
\newcommand{\so}            {\Sigma\mbox{-ope\-ra\-tor}}
\newcommand{\sos}           {\Sigma\mbox{-ope\-ra\-tors}}
\newcommand{\stn}           {\Sigma\mbox{-tensor norm}}
\newcommand{\sumim}      {\sum\limits_{i=1}^m}
\newcommand{\finn}          {\mathcal{FIN}}
\newcommand{\ban}          {\mathcal{BAN}}
\newcommand{\lev}            {\left\langle}
\newcommand{\rev}            {\right\rangle}
\newcommand{\varfhi}        {\overline{\fhi}}
\newcommand{\br}    		    {{\beta|}}
\newcommand{\ab}    		{\alpha^\beta}
\newcommand{\abr}    		{\alpha^\br}
\newcommand{\vb}   		    {\nu_\beta}
\newcommand{\vt}     		{\nu_\theta}
\newcommand{\vbr}   		{\nu_{\br}}
\newcommand{\xxx}           {X_1,\dots, X_n}
\newcommand{\zzzn}            {Z_1,\dots, Z_n}
\newcommand{\zzzm}            {Z_1,\dots, Z_m}
\newcommand{\eee}          {E_1,\dots, E_n}
\newcommand{\mmm}          {M_1,\dots, M_n}
\newcommand{\xxy}           {X_1,\dots ,X_n,Y}
\newcommand{\eef}           {E_1,\dots ,E_n,F}
\newcommand{\xxyb}         {\left(n,\xxy,\beta\right)}
\newcommand{\eefb}         {\left(n,\eef,\beta\right)}
\newcommand{\xpx}           {X_1\times\dots\times X_n}
\newcommand{\zpzm}           {Z_1\times\dots\times Z_m}
\newcommand{\epe}           {E_1\times\dots\times E_n}
\newcommand{\xxp}           {(x^1,\dots ,x^n)}
\newcommand{\sxx}           {\Sigma_{X_1,\dots, X_n}}
\newcommand{\sxxb}         {\Sigma_{X_1, \dots, X_n}^\beta}
\newcommand{\seebr}        {\Sigma_{E_1, \dots, E_n}^{\br}}
\newcommand{\szzm}            {\Sigma_{Z_1,\dots, Z_m}}
\newcommand{\xtx}      		{X_1\tens\cdots\tens X_n}
\newcommand{\ztz}      		{Z_1\tens\cdots\tens Z_n}
\newcommand{\ztzm}      		{Z_1\tens\cdots\tens Z_m}
\newcommand{\ete}      		{E_1\tens\cdots\tens E_n}
\newcommand{\xtxb}    		{\left(\xtx,\beta\right)}
\newcommand{\ztzt}    		{\left(\ztz,\theta\right)}
\newcommand{\ztzmt}    		{\left(\ztzm,\theta\right)}
\newcommand{\xtxty}    	  	{X_1\tens\cdots\tens X_n\tens Y}
\newcommand{\etetf}        {E_1\tens\cdots \tens E_n\tens F}
\newcommand{\xtxtyd}   	{X_1\tens\cdots\tens X_n\tens Y^*}
\newcommand{\xtxtydd}  	{X_1\tens\cdots\tens X_n\tens Y^{**}}
\newcommand{\etetfd}      {E_1\tens\cdots \tens E_n\tens F^*}
\newcommand{\xtxp}         {X_1\hat{\tens}_\pi\cdots\hat{\tens}_\pi X_n}
\newcommand{\xxt}           {x^1\tens\cdots\tens  x^n}
\newcommand{\xxyt}         {x^1\tens\cdots \tens x^n\tens y}
\newcommand{\pmq}         {p-q}
\newcommand{\pimqi}       {p_i-q_i}
\newcommand{\Txxy}         {T:\xpx\into Y}
\newcommand{\tlin}           {\widetilde{T}}
\newcommand{\ft}              {f_T}
\newcommand{\rlin}           {\widetilde{R}}
\newcommand{\fr}              {f_R}
\newcommand{\Lxxy}           {\mathcal{L}(\xxx; Y)}
\newcommand{\Lbxx}         {\mathcal{L}^\beta\left(\xxx\right)}
\newcommand{\Lbxxy}       {\mathcal{L}^\beta\left(\xxx;Y\right)}
\newcommand{\Lbee}        {\mathcal{L}^\beta\left(\eee\right)}
\newcommand{\Lbree}       {\mathcal{L}^{\br}\left(\eee\right)}
\newcommand{\Ltzzm}          {\mathcal{L}^\theta\left(\zzzm\right)}
\newcommand{\Lb}            {Lip^\beta}
\newcommand{\ideal}         {[\mathcal{A},A]}
\newcommand{\Abxxy}       {\mathcal{A}^\beta(\xxx; Y)}
\newcommand{\Atzzmw}       {\mathcal{A}^\theta(\zzzm; W)}
\newcommand{\Abeef}       {\mathcal{A}^\beta(\eee; F)}
\newcommand{\Abxxyd}     {\mathcal{A}^\beta(\xxx; Y^*)}
\newcommand{\Abeefd}     {\mathcal{A}^\beta(\eee; F^*)}
\newcommand{\Abxxydd}   {\mathcal{A}^\beta(\xxx; Y^{**})}
\newcommand{\Abxxyddd}   {\mathcal{A}^\beta(\xxx; Y^{***})}
\newcommand{\Fbxxy}       {\mathcal{F}^\beta(\xxx; Y)}
\newcommand{\Ab}          {A^\beta}
\newcommand{\At}          {A^\theta}
\newtheorem{theorem}{Theorem}[section]
\newtheorem{proposition}[theorem]{Proposition}
\newtheorem{corollary}[theorem]{Corollary}
\theoremstyle{definition}
\newtheorem{definition}[theorem]{Definition}
\newtheorem{example}[theorem]{Example}
\theoremstyle{remark}
\newtheorem{remark}[theorem]{Remark}
\numberwithin{equation}{section}
\begin{document}

\setcounter{page}{1}

\title[duality between ideals of multilinear operators and tensor norms]{The duality between ideals of multilinear operators and tensor norms}

\author[S. Garc\'ia-Hern\'andez]{Samuel Garc\'ia-Hern\'andez}

\address{Centro de investigaci\'on en Matem\'aticas, Guanajuato M\'exico}
\email{\textcolor[rgb]{0.00,0.00,0.84}{samuelg@cimat.mx}; orcid:0000-0003-4562-299X}

\subjclass[2010]{47L22, 46M05, 46G25, 46B28, 47B10.}

% 47L22 Ideals of polynomials and of multilinear operators

% 46M05 Tensor products

% 46G25 (Spaces of) multilinear mappings, polynomials

% 47H60 Multilinear and polynomial operators

% 47B10 Operators belonging to operator ideals (nuclear, p-summing, in the Schatten-von Newmann classes etc.)

%47L20 Operator ideals

%46B28 Spaces of operators; tensor products; approximation properties

\keywords{Ideals of multilinear operators, tensor norms, duality, operator ideals, Banach spaces}

\begin{abstract}
We develop the duality theory between ideals of multilinear operators and tensor norms that arises from the geometric approach of $\Sigma$-operators. To this end, we introduce and develop the notions of $\Sigma$-ideals of multilinear operators and $\Sigma$-tensor norms. We establish the foundations of this theory by proving a representation theorem for maximal $\Sigma$-ideals of multilinear operators by finitely generated $\Sigma$-tensor norms and a duality theorem for $\Sigma$-tensor norms. For these notions we also develop basic theory and present concrete examples.
\end{abstract}
\maketitle

\section{Introduction}

The theory of operator ideals, mainly developed by Pietsch in \cite{pietsch78}, is a branch of Functional Analysis that provides a systematic framework to study linear operators grouping them according to the so called ideal property. Many examples of operator ideals find applications in a wide variety of topics related to Functional Analysis, for instance, Operator, Probability and Measure theories as well as Harmonic Analysis and geometry of Banach spaces. Some of these applications are shown throughout the monographs \cite{defant93, diestel95, pietsch78, ryan02, tomczak89,wojtaszczyk91}.

The theory of operator ideals is closely related with the theory of tensor norms developed by Grothendieck \cite{grothendieck53}. Indeed, if we restrict our attention to finite dimensional normed spaces, there exists a one to one relation between operator ideals and tensor norms; this is summarized by $E^*\tens_{\nu} F=\mathcal{A}(E;F)$ where the equality stands for an isometric isomorphism. This relation does not hold for general Banach spaces; however, it is possible to extend it under certain assumptions. To be specific, maximal ideals of linear operators are represented by finitely generated tensor norms through the Representation Theorem for Maximal Ideals. This result is a keystone in the matter since it provides a bridge between operator ideals and tensor norms phenomena in the general case of Banach spaces. This time, the interplay between this two fields has the form $(X\tens_\alpha Y)^*=\mathcal{A}(X;Y^*)$. For a detailed exposition of this result we strongly recommend \cite[Section 17.5]{defant93}.

The purpose of this article is to develop the duality theory for ideals of multilinear operators and tensor norms based on the geometric approach of $\Sigma$-operators. Our two main results are a representation and a duality theorem. These results are situated in the research developed in recent years and devoted to extend the theory of linear operators to other nonlinear situations. Among other perspectives, this research consists of extending ideals of linear operators to the multilinear setting. It is noteworthy the nonexistence of a canonical path to be followed to extend the theory of linear operators to the multilinear setting. For this reason, some classes of linear operators have more than one multilinear version, for instance, the cases of absolutely summing and nuclear operators. A recent approach, detailed in \cite{fernandez-unzueta18}, proposes to study multilinear operators based on the idea of considering multilinear mappings as homogeneous functions. This approach gave rise to the notion of $\sos$. In \cite{angulo18, fernandez-unzueta18, fernandez-unzueta18a, fernandez-unzueta18b} the reader can appreciate the results of this way of thinking. These references develop the classes of bounded multilinear operators, multilinear operators factoring through Hilbert spaces, Lipschitz $p$-summing and $(p,q)$-dominated multilinear operators under the perspective of $\Sigma$-operators. Each of these classes seems to work properly since they verify analogues of the fundamental features of their linear counterparts. Even more, these classes are closed under certain compositions and have a peculiar maximal ideal behavior. Furthermore, they are in duality relation with particular tensor norms which enjoy of a peculiar finitely generated behavior. These duality relations along with the relation between operator ideals and tensor norms lead, naturally,  to investigate a proper notion of ideals of multilinear operators and tensor norms following the geometric approach of $\sos$. For achieving our goals we introduce a proper notion of ideals of multilinear operators and tensor norms. We also specify local properties for these concepts, that is, properties that ensure a good behavior in general Banach spaces regarding finite dimensional subspaces. Specifically, we introduce $\Sigma$-ideals of multilinear operators and $\Sigma$-tensor norms. For these concepts we prove a Representation Theorem for Maximal $\Sigma$-ideals and a Duality Theorem for $\Sigma$-tensor norms.

Let us we describe our results. Fix a positive integer $n$ and Banach spaces $\xxx$ and $Y$. Every bounded multilinear operator $\Txxy$ has associated a homogeneous function $f_T:\sxx\into Y$ where $\sxx\subset \xtx$ is the so called Segre cone of the Banach spaces $X_i$. The projective tensor norm $\pi$ defines a metric structure on $\sxx$. The continuity of the mapping $T$ is equivalent to the Lipschitz condition of $f_T$. A function $f_T$ as above is referred to as the bounded $\Sigma$-operator associated to $T$. For a better development of the theory, particularly on the local features, we have to consider not just the norm $\pi$ but all reasonable crossnorms $\beta$ on $\xtx$.

The notion of $\Sigma$-ideal is intended to group multilinear operators taking into account their associated $\Sigma$-operators (see \eqref{idealproperty}). A component in a $\Sigma$-ideal $\ideal$ is denoted by $\Abxxy$ and is determined by an arrangement of the form $(n,\xxx,Y,\beta)$ where $n$ is a positive integer, $\xxx$ and $Y$ are Banach spaces and $\beta$ is a reasonable crossnorm on $\xtx$.  By definition, the component $\Abxxy$ is a linear subspace of $\Lxxy$ and is normed with its ideal norm denoted by $\Ab$. Roughly speaking, $\mathcal{A}$ is the union of all the possible components and $A$ is an application whose restriction to each component  agrees with the norm $\Ab$ (see Definition \ref{ideal}).

The idea of representing $\Sigma$-ideals in tensorial terms gives rise to the notion of $\Sigma$-tensor norms. These norms appear in two different versions: \emph{on spaces} and \emph{on duals}. Roughly speaking, a $\Sigma$-tensor norm on spaces $\alpha$ assigns a norm $\ab$ on $\xtxty$ such that the dual space $(\xtxty,\ab)^*$ to be isometrically isomorphic to a component $\Abxxyd$ for a certain $\Sigma$-ideal $\ideal$. For a better development of the theory we are in the necessity of consider tensor products of the form $\Lbxxy\tens Y$, where $\Lbxxy$ stand for the dual space of $\xtxb$. A $\Sigma$-tensor norm on duals assigns a norm $\vb$ on $\Lbxxy\tens Y$. As we will see, these two versions are dual to each other (see Definition \ref{tns}, Definition \ref{tnd}  and Theorem \ref{tnstotnd}).

Each of the concepts we are presenting defines the two others, uniquely, in the class of finite dimensional normed spaces. This fact is summarized by the mappings
\begin{eqnarray}\label{relationfinitedimensions}
(\Lbee\tens F^*,\vb)=(\etetf,\ab)^*=\Abeefd\\
\fhi\tens y^*\hspace{1cm}\mapsto\hspace{1.4cm}\fhi\tens y^*\hspace{1.3cm}\mapsto\hspace{1.1cm} \fhi\cdot y^*\hspace{1cm}\nonumber
\end{eqnarray}
where $E_i$ and $F$ are finite dimensional normed spaces, $\nu$ is a $\Sigma$-tensor norm on duals, $\alpha$ is a $\Sigma$-tensor norm on spaces, $\ideal$ is a $\Sigma$-ideal, the tensor $\fhi\tens y^*$ acts by evaluation on $\etetf$ and the operator $\fhi\cdot y^*$ is defined by $\fhi\cdot y^*(z)=\fhi(z)y^*$ for all $z$ in $\epe$. Here, the equality symbol stands for an isometric isomorphism (see Theorems \ref{tnstotnd} and \ref{tndsideal}).

In order to extend \eqref{relationfinitedimensions} to general Banach spaces we are in the necessity of defining maximal $\Sigma$-ideals, finitely generated $\Sigma$-tensor norms on spaces and cofinitely generated $\Sigma$-tensor norms on duals. Our main results are expressed in these terms. Theorem \ref{rt} is The Representation Theorem for Maximal $\Sigma$-ideals and affirms that
\begin{align*}
\left(\xtxty,\ab\right)^*&=\Abxxyd\\
\left(\xtxtyd,\ab\right)^*\cap\mathcal{L}(\xxx;Y) &=\Abxxy
\end{align*}
hold linearly and isometrically if $\alpha$ is a finitely generated $\Sigma$-tensor norm on spaces related with the maximal  $\Sigma$-ideal $\ideal$ by \eqref{relationfinitedimensions}. Theorem \ref{dt} is The Duality  Theorem for $\Sigma$-tensor norms and affirms that the linear inclusion
\begin{eqnarray*}
\left(\Lbxx\tens Y,\vb\right)\hookrightarrow\left(\xtxtyd,\ab\right)^*,
\end{eqnarray*}
given by evaluation, is isometric for all Banach spaces if $\nu$ is cofinitely generated, $\alpha$ is finitely generated, and they are related in finite dimensions as in \eqref{relationfinitedimensions}.

Throughout this article we present, as examples of $\Sigma$-ideals, multilinear versions of the most classical examples of operator ideals. On the side of tensor norms, we exhibit the greatest and least $\Sigma$-tensor norms in both versions. We also recall from \cite{fernandez-unzueta18a} the Laprest\'e-type $\Sigma$-tensor norms on spaces.

It is noteworthy the consistency of the results presented here with those of \cite{floret_hunfeld02} and \cite{achour16, cabrera-padilla15, cabrera-padilla16a, cabrera-padilla17} where other relations of ideals and tensor norms are developed for multilinear and Lipschitz mappings.

The distribution of the material is the next. In Section 1.1 we fix notation. In Section 2 we recall basic theory of $\Sigma$-operators. In Section 3 we introduce and develop the notion of $\Sigma$-ideals. In Section 4 we introduce the notion of $\Sigma$-tensor norms. In this section we also specify the relation between $\Sigma$-tensor norms and $\Sigma$-ideals in finite dimensions. In Section 5 we relate $\Sigma$-tensor norms and $\Sigma$-ideals in general Banach spaces. In this section we prove the representation and duality theorems.

\subsection{Notation}

The letter $\kk$ denotes the filed of real or complex numbers. The unit ball of the normed space $X$ is denoted by $B_X$. The operator $K_X:X\into X^{**} $ denote the canonical embedding given by evaluation. The set of finite dimensional subspaces of $X$ is denoted by $\mathcal{F}(X)$ and the set of finite codimensional subspaces is denoted by $\mathcal{CF}(X)$. The class of all Banach spaces is denoted by $\ban$ while $\finn$ denote the class of all finite dimensional normed spaces. If $f:A\into B$ is a function, then we use the notation $\lev f  ,  a \rev:=f(a)$.

Throughout this article $n$ denotes a positive integer and the letters $X_1,\dots, X_n$, $Z_1,\dots, Z_n$, $Y$, $W$ denote Banach spaces. The symbol $\Lxxy$ denotes the Banach space of all bounded multilinear operators $T:\xpx\into Y$ with the usual norm $\|T\|=\sup\{ \|T\xxp\| \;|\; x_i\in B_{X_i}, 1\leq i\leq n\}$. If $Y=\kk$ we simply write $\mathcal{L}(\xxx)$.

%%%%%%%%%%%%%%%%%%%%%%%%%%%%%%%%%%%%%%%%%%%%%%%%%%%%%%%%%%%%%%%%%%%%%%%       BOUNDED SIGMA OPERATORS       %%%%%%%%%%%%%%%%%%%%%%%%%%%%%%%%%%%%%%%%%%%%%%%%%%%%%%%%%%%%%%%%%%%%%%%%%%%
\section{$\Sigma$-operators and Multilinear Operators}

The set of decomposable tensors of the algebraic tensor product $\xtx$ is denoted by $\sxx$. That is, $\sxx:=\left\{\;  \xxt  \;|\;  x^i\in X_i, 1 \leq i \leq n  \;\right\}$.

From now on $\beta$ denote a reasonable crossnorm on $\xtx$. That is, a norm such that $\varepsilon (u)\leq \beta(u) \leq \pi(u)$ for all $u$ in $\xtx$, where $\varepsilon$ and $\pi$ are the injective and projective tensor norms given by
\begin{align*}
\pi(u)&= \inf \left\{\;  \sumim \|x_i^1\|\cdots\|x_i^n\|  \;\Big|\; u=\sumim x_i^1\tens\cdots\tens x_i^n \;\right\},\\
\varepsilon(u)&= \sup \left\{\;  |x_1^*\tens\cdots\tens x_n^*(u)|  \;\Big|\; x_i^*\in B_{X_i^*}, 1\leq i\leq n \;\right\}
\end{align*}
for all $u$ in $\xtx$.

Every norm $\beta$ defines a metric on the set $\sxx$ but according to \cite[Theorem 2.1]{fernandez-unzueta18} all these metrics are equivalent. Explicitly,
\begin{equation}\label{lipschitzequivalent}
\pi(\pmq) \leq 2^{n-1} \beta(\pmq) \qquad \forall\, p,q\in \sxx.
\end{equation}
We also use the symbol $\sxx$ to denote the metric space resulting of restricting the norm $\pi$ to the set of decomposable tensors of $\xtx$. This metric space is the so called Segre cone of $\xxx$.

\begin{remark}\label{betarestricted}
It is straightforward that if $E_i$ is a closed subspace of $X_i$, $1 \leq  i \leq n$, the restriction of the norm $\beta$ to the tensor product $\ete$ is a reasonable crossnorm. Such restrictions are denoted by $\br$. The consideration of these restrictions will be crucial for defining the local behavior of $\Sigma$-ideals and $\Sigma$-tensor norms (see Definition \ref{maximalhull}, Definition \ref{finitelygenerated} and Definition \ref{cofinitelygenerated}).
\end{remark}

The symbol $\Lbxx$ denote the Banach space of all multilinear forms $\fhi:\xpx\into \kk$ whose linearization $\tilde{\fhi}:\xtxb\into \kk$ is bounded provided with the norm $\|\fhi\|_\beta:=\|\tilde{\fhi}\|$. Plainly, the mapping $\fhi\mapsto\tilde{\fhi}$ defines a linear isometry between $\Lbxx$ and $\xtxb^*$.

The universal property of the projective tensor product establishes that for every bounded  multilinear operator $T:\xpx\into Y$ there exists a unique bounded linear operator $\tlin:\xtxp\into Y$ such that $T\xxp=\tlin(\xxt)$ for all $x^i\in X_i$, $1 \leq i \leq n$. In particular, the restriction $\tlin|_{\sxx}:\sxx\into Y$ is a homogeneous Lipschitz function. In this situation, the operator $\tlin$ is called the linearization of $T$ and the function $f_T:=\tlin|_{\sxx}$ is named the bounded $\so$ associated to $T$. Under these circumstances \eqref{lipschitzequivalent} implies
\begin{equation}\label{lipnorm}
\|T\|=Lip^\pi(f_T)\leq\Lb(f_T)\leq 2^{n-1}Lip^\pi(f_T)
\end{equation}
for all bounded multilinear operators $T$ and reasonable crossnorms $\beta$.

%%%%%%%%%%%%%%%%%%%%%%%%%%%%%%%%%%%%%%%%%%%%%%%%%%%%%%%%%%%%%%%%%%%%%%%%%   IDEALS OF MULTILINEAR OPERATORS  %%%%%%%%%%%%%%%%%%%%%%%%%%%%%%%%%%%%%%%%%%%%%%%%%%%%%%%%%%%%%%%%%%%%%%%%%%

\section{ $\Sigma$-Ideals of Multilinear Operators}

We begin our study extending the definition of linear operator ideals (see \cite[Section 9]{defant93}) to the multilinear case following the approach of $\Sigma$-operators. Specifically, we introduce the notion of $\Sigma$-ideal of multilinear operators. In a few words, this new concept is intended to group multilinear operators taking into account their associated $\Sigma$-operators. For this reason, all the properties defining $\Sigma$-ideals require the language of $\Sigma$-operators. For a better exposition, before presenting each new definition we expose necessary details and notation.

An arrangement of the form $\xxyb$ where $n$ is a positive integer, $X_i$, $1 \leq i \leq n$, $Y$ are Banach spaces and $\beta$ is a reasonable crossnorm on $\xtx$ is named an \emph{election in the class} $\ban$. Obvious definition follows for elections in the class of finite dimensional normed spaces $\finn$.

The essential feature of a linear operator ideal is the ideal property. That is, the property which ensures that a composition $RTS$ of linear operators is in a given ideal whenever $T$ is. The first step in our development is to specify the corresponding ideal property. To this end, consider two elections $\xxyb$ and $(m,\zzzm,W)$ in the class $\ban$, a bounded multilinear operator $\Txxy$, a bounded linear operator $S:Y\into W$ and a bounded multilinear operator $R:\zpzm\into \xtxb$ such that $\rlin:(\ztzm,\theta)\into \xtxb$ is bounded and has image contained in $\sxx$. With the help of the diagram
\begin{equation}\label{idealproperty}
\begin{array}{c}
\xymatrix{
\zpzm\ar[d]\ar[drr]^-R                     &&\xpx\ar[d]\ar[drr]^-T && &\\
\szzm\ar[d]\ar[rr]^{\fr}     &&\sxx\ar[d]\ar[rr]^{f_T}  &&Y\ar[r]^S   &W\\
    (\ztzm,\theta)\ar[rr]^-{\rlin}                     &&\xtxb  &&&
}
\end{array}
\end{equation}
it is plain that $Sf_TR=Sf_Tf_R\tens$, where $\tens:\zpzm\into \ztzm$ is the natural multilinear mapping. Thus, instead of studying a composition from $\zpzm$ to $W$ we study $Sf_Tf_R$ taking advantage of the geometry that the normed spaces $(\ztzm,\theta)$ and $\xtxb$ has to offer. A $\Sigma$-operator $f_R$ as above is named $\mathbf{\Sigma}$-$\mathbf{\beta}$-$\mathbf{\theta}$-{\bf operator}.

\begin{definition}\label{ideal}
A {\bf $\mathbf{\Sigma}$-ideal of multilinear operators} $\ideal$ defined on $\ban$ assigns to each election $\xxyb$ in the class of Banach spaces $\ban$ a linear subspace, named component, $\Abxxy$ of $\Lxxy$ and a norm $\Ab$ on $\Abxxy$ which makes it a Banach space and satisfy the following properties:
\begin{itemize}
	\item [I1]  Every rank-1 multilinear operator $\fhi\cdot y$, where $\fhi\in\Lbxx$ and $y\in Y$, is in the component $\Abxxy$ and $\Ab(\fhi\cdot y)\leq\|\fhi\|_\beta\;\|y\|$.
    \item [I2]  $\Lb(f_T) \leq\Ab(T)$ for all $T$ in  $\Abxxy$.
    \item [I3] If in the composition 
    \[\begin{array}{c}
\xymatrix{
\szzm\ar[r]^-\fr   &   \sxx\ar[r]^-{f_T}   &   Y\ar[r]^-S   &   W \\
}
\end{array}\]
    $\fr$ is a $\Sigma$-$\beta$-$\theta$-operator, $T$ is in $\Abxxy$ and $S:Y\into W$ is a bounded linear operator, then the multilinear operator $Sf_TR:\zpzm\into W$ belongs to $\Atzzmw$ and $\At(Sf_TR)\leq \|\rlin\| \Ab(T) \|S\|$.
\end{itemize}
\end{definition}

As the reader knows, there exist other approaches of ideals of multilinear operators highly influenced by ideas of Pietsch \cite{pietsch84}. This is the case of hyper-ideals developed by Botelho and Torres  \cite{botelho15} and multi-ideals developed by Floret and Hunfeld \cite{floret_hunfeld02}. This late is the most popular notion and  many of known examples of classes of multilinear operators fit in this framework. Next, we will see that a $\Sigma$-ideal $\ideal$ enjoys the benefits of multi-ideals although, strictly speaking, they are not comparable (since for defining a component in a $\Sigma$-ideal we have to provide a reasonable crossnorm $\beta$).

\begin{proposition}\label{multiideal}
Let $\ideal$ be a $\Sigma$-ideal in the class $\ban$. Let $(n, \xxx, Y,\beta)$ be an election in the class of Banach spaces. Then
\begin{itemize}
\item [(i)] Every rank-1 multilinear operator of the form
\begin{eqnarray*}
x_1^*\tens \dots \tens x_n^*\tens y,:\xpx &\into & Y\\
            \xxp  &\mapsto & x_1^*(x^1)\dots x_n^*(x^n)y,
\end{eqnarray*}
where $x_i^*\in X_i^*$, $1 \leq i \leq n$ and $y\in Y$, is in $\Abxxy$ and $\Ab(x_1^*\tens \dots \tens x_n^*\tens y)\leq \|x_1^*\|\dots \|x_n^*\|\| y\|$.

\item [(ii)] The inclusion $\Abxxy\subset \Lxxy$ is bounded and has norm at most 1.

\item [(iii)] $\Ab(\kk\times\dots\times\kk \ni (\lambda_1,\dots, \lambda_n)\mapsto \lambda_1\cdots\lambda_n\in\kk)=1$.

\item [(iv)] Let $T_i:Z_i\into X_i$, $1\leq i\leq n$ and $S:Y\into W$ be bounded linear operators and let $T$ in $\Abxxy$. Then $ST(T_1\times\dots\times T_n)$ is in $\mathcal{A}^\pi(\zzzn;W)$ and $A^\pi(STR)\leq \|T_1\|\dots\|T_n\|\Ab(T)\|S\|$.
\end{itemize}
\end{proposition}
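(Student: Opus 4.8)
The plan is to derive all four parts from the three defining axioms I1--I3 of a $\Sigma$-ideal, using in addition the norm comparison \eqref{lipnorm} and the two order properties of reasonable crossnorms, namely $\varepsilon\leq\beta$ and $\beta\leq\pi$. I would carry them out in the order (i) $\Rightarrow$ (ii) $\Rightarrow$ (iii), with (iv) last, since (iv) is the only one that needs the full strength of I3.

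For (i), the first move is to recognize $x_1^*\tens\cdots\tens x_n^*\tens y$ as the rank-$1$ operator $\fhi\cdot y$, where $\fhi\in\mathcal{L}(\xxx)$ is the multilinear form $(x^1,\dots,x^n)\mapsto x_1^*(x^1)\cdots x_n^*(x^n)$, so that I1 applies as soon as one checks $\fhi\in\Lbxx$ with $\|\fhi\|_\beta\leq\|x_1^*\|\cdots\|x_n^*\|$. This estimate is exactly where $\varepsilon\leq\beta$ enters: the linearization $\tilde{\fhi}$ is evaluation against $x_1^*\tens\cdots\tens x_n^*$, and for $\beta(u)\leq 1$ one has $\varepsilon(u)\leq 1$, so the very definition of $\varepsilon$ forces $|\tilde{\fhi}(u)|\leq\|x_1^*\|\cdots\|x_n^*\|$. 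Part (ii) is then immediate: for $T\in\Abxxy$, I2 gives $\Lb(\ft)\leq\Ab(T)$ while the left half of \eqref{lipnorm} gives $\|T\|=Lip^\pi(\ft)\leq\Lb(\ft)$. For (iii), on $\kk\tens\cdots\tens\kk\cong\kk$ both $\pi$ and $\varepsilon$ reduce to the modulus, so $\beta$ is forced, and the map $(\lambda_1,\dots,\lambda_n)\mapsto\lambda_1\cdots\lambda_n$ is the rank-$1$ operator $e^*\tens\cdots\tens e^*\tens 1$ with $e^*\in\kk^*$ the identity functional; (i) then bounds its norm above by $1$ and (ii) bounds it below by its sup-norm, which is $1$.

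The substantive step is (iv), which I would obtain by casting $ST(T_1\times\dots\times T_n)$ into the composition handled by I3, taking $\theta=\pi$ on $\ztz$. The required $\Sigma$-$\beta$-$\pi$-operator is $\fr:=\rlin|_{\szz}$, where $R:\zpz\into\xtxb$ is given by $R(z^1,\dots,z^n)=T_1z^1\tens\cdots\tens T_nz^n$, so that $\rlin=T_1\tens\cdots\tens T_n$. Two things must then be verified: that $\rlin$ is bounded from $(\ztz,\pi)$ into $\xtxb$ with $\|\rlin\|\leq\|T_1\|\cdots\|T_n\|$, which is where $\beta\leq\pi$ on the codomain together with the projective description of $\pi$ is used; and that $\rlin$ carries decomposable tensors to decomposable tensors, so that $\fr$ indeed maps $\szz$ into $\sxx$. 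Granting this, I3 applied to $\fr$, $T\in\Abxxy$ and $S:Y\into W$ places $S\ft R$ in $\mathcal{A}^\pi(\zzzn;W)$ with $A^\pi(S\ft R)\leq\|\rlin\|\,\Ab(T)\,\|S\|\leq\|T_1\|\cdots\|T_n\|\,\Ab(T)\,\|S\|$, and a one-line check using $\ft(x^1\tens\cdots\tens x^n)=T(x^1,\dots,x^n)$ identifies $S\ft R$ with $ST(T_1\times\dots\times T_n)$. I do not expect a genuine obstacle anywhere; the only point needing attention is keeping straight which crossnorm inequality is invoked where --- $\varepsilon\leq\beta$ for the elementary functional in (i), and $\beta\leq\pi$ for the elementary operator $\rlin$ in (iv).
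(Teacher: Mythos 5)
Your proof is correct and follows essentially the same route as the paper: parts (i)--(iii) from I1, I2 and \eqref{lipnorm}, and part (iv) by observing that $R=(z^1,\dots,z^n)\mapsto T_1z^1\tens\cdots\tens T_nz^n$ induces a $\Sigma$-$\beta$-$\pi$-operator with $\|\rlin\|\leq\|T_1\|\cdots\|T_n\|$ and then invoking I3. The paper states these steps more tersely; you simply supply the crossnorm estimates ($\varepsilon\leq\beta$ for (i), $\beta\leq\pi$ for (iv)) that it leaves implicit.
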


\begin{proof}
The proof of (i) is immediate from Property I1; (ii) is a direct consequence of \eqref{lipnorm} and I2; (i) and (ii) imply (iii). For the proof of (iv) take $R=T_1\times\dots\times T_n$. The composition $STR$ can be factored as $Sf_T(\tens\circ (T_1\times\dots\times T_n))$, where $\tens:\xpx\into \sxx\subset\xtx$ is the natural multilinear map. It is clear that the composition $\tens\circ (T_1\times\dots\times T_n)$ is a $\Sigma$-$\beta$-$\pi$-operator. Then, I3 completes the proof.
\end{proof}

Among the benefits of the ideal property for $\Sigma$-ideals we have that it enable us to relate components of $n$-linear operators with components of $m$-linear operators not just $n$-linear operators with itself as is the case of multi-ideals.

An interesting situation occurs when considering those multilinear operators whose rank is contained in finite dimensional subspaces. Explicitly, let $\Fbxxy$ be the linear space of all bounded multilinear operators $T:\xpx\into Y$ such that their linearization $\tlin:\xtxb\into Y$ is bounded and  $T(\xpx)$ is contained in a finite dimensional subspace of $Y$. According to Proposition \ref{multiideal} (i), $\Fbxxy$ is contained in the component $\Abxxy$ for every $\Sigma$-ideal $\ideal$. Recall that this does not occur, in general, for multi-ideals (see \cite[Lemma 5.4]{jarchow07}).

The next step in our development is to specify the maximality for $\Sigma$-ideals. This property is intended to study multilinear operators by means of their behavior on finite dimensional spaces. For specifying maximality, fix a $\Sigma$-ideal $\ideal$ and an election $\xxyb$ in $\ban$ and let $T\in\Abxxy$. Let $E_i$ be a finite dimensional subspace of $X_i$ for $1\leq i \leq n$ and let $L$ be a cofinite dimensional subspace of $Y$. Consider the multilinear operator between finite dimensional normed spaces $Q_L T (I_{E_1}\times\dots\times I_{E_n}):\epe\into Y/L$ where $I_{E_i}$ is the natural inclusion of $E_i$ into $X_i$ and $Q_L$ is the canonical quotient map from $Y$ onto $Y/L$. In a maximal $\Sigma$-ideal we are able to calculate $\Ab(T)$ taking into account the values $A^\br(Q_L T (I_{E_1}\times\dots\times I_{E_n}))$ (see Remark \ref{betarestricted}). For shorten notation let $I_{\eee}:\epe\into \xtxb$ be the multilinear map defined by $I_{\eee}\xxp=\xxt$. Plainly, $I_{\eee}$ is a $\Sigma$-$\beta$-$\br$-operator.

\begin{definition}\label{maximality}
A $\Sigma$-ideal $\ideal$ is named maximal if
\[\Ab(T)=\sup  A^{\br}(Q_L f_T I_{\eee}:\epe\into Y/L)\]
for all $T$ in $\Abxxy$ where the suprema is taken over all $E_i\in\mathcal{F}(X_i)$, $1\leq i\leq n$, and $L\in\mathcal{CF}(Y)$.
\end{definition}

Most of the classical maximal ideals of linear operators admit a multilinear version following the approach of $\Sigma$-operators.

\begin{example}\label{boundedmultilinearoperators} \textbf{The $\mathbf{\Sigma}$-ideal of bounded multilinear operators.} Let $\sxxb$ be the metric space resulting by restricting the norm $\beta$ to $\sxx$. Recall that a multilinear operator $\Txxy$ is bounded if and only if $f_T:\sxxb\into Y$ is Lipschitz (see Section 2). Let $\mathcal{L}_{Lip}^\beta(\xxx;Y)$ be the Banach space $\Lxxy$ endowed with the norm $\Lb$. The $\Sigma$-ideal whose components are $\mathcal{L}_{Lip}^\beta(\xxx;Y)$ is a maximal $\Sigma$-ideal.
\end{example}

\begin{example}\textbf{The $\mathbf{\Sigma}$-ideal of Lipschitz $\mathbf{p}$-summing multilinear operators.} Following \cite{angulo18}, a multilinear operator $\Txxy$ is Lipschitz $\beta$-$p$-summing if there exists a positive constant $C$ such that
\[\sumim \|T(a_i)-T(b_i)\|^p\leq C^p \sup \left\{  \sumim |\fhi(a_i)-\fhi(b_i)|^p : \fhi\in B_{\Lbxx}  \right\}\]
for all finite sequences $(a_i)_{i=1}^m$ and $(b_i)_{i=1}^m$ in $\xpx$. The smallest constant $C$ as above is denoted by $\pi_p^{Lip,\beta}(T)$ and is named the Lipschitz $\beta$-$p$-summing norm of $T$. Let $\Pi_{p}^{Lip,\beta}(\xxx;Y)$ denote the Banach space of all Lipschitz $\beta$-$p$-summing multilinear operators from $\xpx$ to $Y$ endowed with the norm $\pi_p^{Lip,\beta}$. Then the collection defined as $\Pi_p^{Lip}=\bigcup \Pi_{p}^{Lip,\beta}(\xxx;Y),$ where the union is taken over all elections $(n,\xxx,Y,\beta)$ in $\ban$, is a maximal $\Sigma$-ideal. For the case $n=1$ see \cite[Chapter 2]{diestel95}.
\end{example}

\begin{example}\label{factorhilbert}\textbf{The $\mathbf{\Sigma}$-ideal of multilinear operators factoring through Hilbert spaces.} In \cite{fernandez-unzueta18a} it is specified the notion of these operators for the case $\pi$. An easy adaptation produced the next definition. Fix and election $(n,\xxx, Y, \beta)$ in $\ban$. The multilinear operator $T:\xpx\into Y$ factors through a Hilbert space with respect to $\beta$ if there exists a Hilbert space $H$, a subset $M$ of $H$, a bounded multilinear operator $A:\xpx\into H$ whose image is contained in $M$, and a Lipschitz function $B:M\into Y$ such that the next diagram commutes
\begin{small}
\begin{equation}\label{factorization}
\begin{array}{c}
\xymatrix{
\xpx\ar[dr]_{A}\ar[rr]^-T   &                              &Y\\
                                              &M\ar[ur]_B\ar[d]  &\\
                                              &H                           &
}
\end{array}.
\end{equation}
\end{small}Define $\Gamma^\beta(T)$ as $\inf \Lb(A)\, Lip(B)$ where the infimum is taken over all possible factorizations as above. Let $\Gamma^\beta(\xxx;Y)$ denote the Banach space of all these operators endowed with the norm $\Gamma^\beta$. Then the collection defined as $\Gamma=\bigcup \Gamma^{\beta}(\xxx;Y)$, where the union is taken over all elections $(n,\xxx,Y,\beta)$ in $\ban$ is a maximal $\Sigma$-ideal. For the case $n=1$ see \cite[Section 2.b]{pisier86}
\end{example}

\begin{example}\label{dominated}\textbf{The $\mathbf{\Sigma}$-ideal of $\mathbf{(p,q)}$-dominated multilinear operators.} In \cite[Definition 4.1]{fernandez-unzueta18b} we may find: Let $1\leq p,q \leq \infty$ such that $1/p+1/q \leq 1$. Take the unique $r\in[1,\infty]$ such that $1=1/r+1/p+1/q$. The  multilinear operator $T:\xpx\into Y$ is called $\beta$-$(p,q)$-dominated if there exists a constant $C>0$ such that
\[\|(\lev y_i^*  ,  T(x_i^1,\dots, x_i^n)- T(z_i^1,\dots, z_i^n)  \rev)_{i=1}^m\|_{r^*}\leq C\|(x_i^1\tens\cdots\tens x_i^n-z_i^1\tens\cdots\tens z_i^n)_{i=1}^m\|_{p}^{w,\beta}  \|(y_i^*)_{i=1}^m\|_{q}^w\]
holds for all finite sequences $(x_i^1,\dots, x_i^n)_{i=1}^m$, $(z_i^1,\dots, z_i^n)_{i=1}^m$ in $\xpx$ and $(y_i^*)_{i=1}^m$ in $Y$. Here
\[  \|(a_i-b_i )_{i=1}^m\|_{p}^{w,\beta} := \sup \left\{  \left(\sum_{i=1}^m |f_\fhi(a_i)-f_\fhi(b_i)|^{p}\right)^{1/p}  : \fhi\in B_{\Lbxx} \right\}. \]
Define $D_{p,q}^\beta(T)$ as the infimum of the constants $C$ as above. Let $\mathcal{D}_{p,q}^\beta(\xxx;Y)$ denote the Banach space of all $\beta$-$(p,q)$-dominated multilinear operators endowed with the norm $D_{p,q}^\beta$. Then the collection defined as $\mathcal{D}_{p,q}=\bigcup \mathcal{D}_{p,q}^{\beta}(\xxx;Y)$, where the union is taken over all elections $(n,\xxx,Y,\beta)$ in $\ban$ is a maximal $\Sigma$-ideal. See \cite[Section 19]{defant93} for the case $n=1$.
\end{example}

As we can see, in some cases the ideals has a simple demeanor. For instance, in the case of the $\Sigma$-ideal of bounded multilinear operators $[\mathcal{L};Lip]$ (see Example \ref{boundedmultilinearoperators}) all the components are isomorphic once we fix a positive integer $n$ and Banach spaces $\xxx$ and $Y$ (see \eqref{lipnorm}). Concretely, we have that
\[ \|Id: \mathcal{L}_{Lip}^\pi(\xxx; Y)\into  \mathcal{L}_{Lip}^\beta(\xxx; Y) \|\leq 1 \]
\[ \|Id: \mathcal{L}_{Lip}^\beta(\xxx; Y)\into  \mathcal{L}_{Lip}^\pi(\xxx; Y) \|\leq 2^{n-1}. \]
A similar result is valid for the $\Sigma$-ideal of multilinear operators which can be factored through a Hilbert space $[\Gamma,\gamma]$ (see Example \ref{factorhilbert}). In other situation it is not clear if the components are related in general, for instance we do not know if a Lipschitz $\pi$-$p$-summing multilinear operator is Lipschitz $\beta$-$p$-summing for some other reasonable crossnorm $\beta$. In Section 5.2.1 we explore more about this phenomena.

%%%%%%%%%%%%%%%%%%%%%%%%%%%%%%%%%%%%%%%%%%%%%%%%%%%%%%%%%%%%%%%%%%%%%%%%%%    TENSOR NORMS    %%%%%%%%%%%%%%%%%%%%%%%%%%%%%%%%%%%%%%%%%%%%%%%%%%%%%%%%%%%%%%%%%%%%%%%%%%%%%%%%%%%%

\section{$\Sigma$-Tensor Norms}

Throughout this section we extend the concept of tensor norm for two factors (see \cite[Section 12.1]{defant93}) to the case of several factors having in mind that we are interested in studying multilinear operators by means of their associated $\Sigma$-operators. The outcome of this approach is the concept of $\Sigma$-tensor norm. As we will see, these norms appear in two different version: \emph{on spaces} and \emph{on duals}.

\subsection{$\mathbf{\Sigma}$-Tensor Norms on Spaces}

This type of tensor norms is a procedure that assign a norm to each tensor product of the form $\xtxty$ such that their topological dual identifies a component of a $\Sigma$-ideal, that is, multilinear operators from $\xpx$ to $Y^*$ (see Theorem \ref{rt} and \eqref{conclusion}).

\begin{definition}\label{tns}
A {\bf $\Sigma$-tensor norm on spaces} $\alpha$ on the class of $\ban$ assigns, to each election $\xxyb$ in $\ban$, a norm $\alpha^\beta$ on the algebraic tensor product $\xtxty$ such that:
\begin{itemize}
    \item [S1]  $\alpha^\beta((p-q)\tens y)\leq\beta(p-q)\,\|y\|$ for every $p,q\in\sxx$ and $y\in Y$.
    \item [S2]  For every $\fhi\in\Lbxx$ and $y^*\in Y$ the linear functional
    \begin{eqnarray*}
     \fhi\tens y^*:\xtxty &\into &		   \kk\\
	                \xxyt         &\mapsto &  f_\fhi(\xxt)y^*(y)
    \end{eqnarray*}
    is bounded and $\|\fhi\tens y^*\|\leq\|\fhi\|_\beta\,\|y^*\|$.
    \item [S3]  If $\fr:\szzm\into\sxx$ and $S:W\into Y$ denote a $\Sigma$-$\beta$-$\theta$-operator and a bounded linear operator respectively, then the tensor product operator
    \begin{eqnarray*}
    \fr\tens S:\left(\ztzm\tens W,\alpha^\theta\right)   &\into &		  \left(\xtxty,\alpha^\beta\right)\\
	       z^1\tens\dots\tens z^m\tens w             &\mapsto &  \fr(z^1\tens\dots\tens z^m)\tens S(w)
    \end{eqnarray*}
is bounded and $\|f_R\tens S\|\leq\|\rlin\|\,\|S\|$.
\end{itemize}
\end{definition}

The authors of \cite{floret_hunfeld02} also develop the notion of tensor norms for tensor products of several factors. Next, we show that $\Sigma$-tensor norms on spaces enjoy the same benefits.

\begin{proposition}
Let $\alpha$ be a $\Sigma$-tensor norm on spaces in the class $\ban$. Let $(n,\xxx,Y,\beta)$ be an election in the class of Banach spaces. Then
\begin{itemize}
\item [(i)] $\ab(\xxt\tens y)\leq \|x^1\|\dots \|x^n\|\|y\|$ for all $x^i\in X_i$ and $y\in Y$.

\item [(ii)] The functionals $x_1^*\tens\dots\tens x_n^*\tens y^*:(\xtxty,\ab)\into \kk$ are bounded and $\|x_1^*\tens\dots\tens x_n^*\tens y^* \|\leq \|x_1^*\|\dots \|x_n^*\| \|y^*\|$ for all $x_i^*\in X_i^*$ and $y\in Y^*$.

\item [(iii)] If $T_i:Z_i\into X_i$, $1\leq i\leq n$ and $S:Y\into W$ be bounded linear operators, then
\begin{eqnarray*}
T_1\tens\dots\tens T_n\tens S:\left(\ztz\tens W,\alpha^\pi\right)   &\into &		  \left(\xtxty,\alpha^\beta\right)\\
	       z^1\tens\dots\tens z^n\tens w             &\mapsto &  T_1(z^1)\tens\dots\tens T_n(z^n)\tens S(w)
\end{eqnarray*}
is bounded and $\|T_1\tens\dots\tens T_n\tens S\|\leq \|T_1\|\dots\|T_n\|\|S\|$.
\end{itemize}
\end{proposition}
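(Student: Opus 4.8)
The plan is to obtain all three items the same way Proposition \ref{multiideal} was obtained from the ideal axioms: each follows by inserting degenerate data into the axioms S1--S3, and nothing beyond those axioms together with the crossnorm inequalities $\varepsilon\le\beta\le\pi$ will be needed.

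For (i) I would apply S1 with $p=\xxt$ and $q=0\in\sxx$, which gives $\ab(\xxt\tens y)=\ab\big((\xxt-0)\tens y\big)\le\beta(\xxt)\,\|y\|$, and then use $\beta(\xxt)\le\pi(\xxt)=\|x^1\|\cdots\|x^n\|$. For (ii) I would first regard $\fhi:=x_1^*\tens\cdots\tens x_n^*$ as an $n$-linear form and verify that $\fhi\in\Lbxx$ with $\|\fhi\|_\beta\le\|x_1^*\|\cdots\|x_n^*\|$; this is the only slightly non-formal point, and it follows from $\beta\ge\varepsilon$ together with the remark that, for any $u\in\xtx$, normalising the functionals and invoking the very definition of $\varepsilon$ yields $|\widetilde{\fhi}(u)|\le\|x_1^*\|\cdots\|x_n^*\|\,\varepsilon(u)\le\|x_1^*\|\cdots\|x_n^*\|\,\beta(u)$ (the case where some $x_i^*$ vanishes being trivial). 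Then S2 applied to $\fhi$ and $y^*$ delivers a bounded functional $\fhi\tens y^*$ with $\|\fhi\tens y^*\|\le\|\fhi\|_\beta\,\|y^*\|$, and comparing values on elementary tensors shows $\fhi\tens y^*=x_1^*\tens\cdots\tens x_n^*\tens y^*$, which is the assertion.

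For (iii) I would take the bounded multilinear operator $R:\zpz\into\sxx\subset\xtx$ given by $R(z^1,\dots,z^n)=T_1(z^1)\tens\cdots\tens T_n(z^n)$; its image consists of decomposable tensors and its linearisation is $\rlin=T_1\tens\cdots\tens T_n$, for which the estimate $\beta\big(T_1z^1\tens\cdots\tens T_nz^n\big)\le\pi\big(T_1z^1\tens\cdots\tens T_nz^n\big)\le\|T_1\|\cdots\|T_n\|\,\pi(z^1\tens\cdots\tens z^n)$, followed by passing to the infimum defining $\pi$ on $\ztz$, gives $\|\rlin:(\ztz,\pi)\into(\xtx,\beta)\|\le\|T_1\|\cdots\|T_n\|$. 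Hence $\fr$ is a $\Sigma$-$\beta$-$\pi$-operator --- this is precisely the observation already used in the proof of Proposition \ref{multiideal}(iv) --- so applying S3 to $\fr$ and $S$ yields that $\fr\tens S$ is bounded with $\|\fr\tens S\|\le\|\rlin\|\,\|S\|\le\|T_1\|\cdots\|T_n\|\,\|S\|$; since $\fr\tens S$ agrees with $T_1\tens\cdots\tens T_n\tens S$ on elementary tensors, this is what was claimed. I do not expect a genuine obstacle: the whole proof is a matter of choosing the right degenerate maps, and the only step that requires a moment's thought is the $\varepsilon$-based bound on $\|x_1^*\tens\cdots\tens x_n^*\|_\beta$ used in (ii).
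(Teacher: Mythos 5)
Your proof is correct and follows essentially the same route as the paper's (much terser) argument: items (i) and (ii) are read off from S1 and S2 using that $\beta$ is a reasonable crossnorm, and item (iii) is exactly the paper's observation that $R=\tens\circ(T_1\times\dots\times T_n)$ induces a $\Sigma$-$\beta$-$\pi$-operator to which S3 applies. The extra details you supply (taking $q=0\in\sxx$ in S1, and the $\varepsilon$-based bound $\|x_1^*\tens\cdots\tens x_n^*\|_\beta\le\|x_1^*\|\cdots\|x_n^*\|$) are precisely what the paper leaves implicit.
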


\begin{proof}
Items (i) and (ii) are immediate from definition since $\beta$ is a reasonable crossnorm. Item (iii) follows from the fact that the composition $R=\tens\circ (T_1\times\dots\times T_n)$ is a $\Sigma$-$\beta$-$\pi$-operator, where $\tens:\xpx\into \sxx\subset\xtxb$ is the natural multilinear map. The proof is complete by applying Property S3.
\end{proof}

The next step in our development is to specify a proper notion of finitely generation for $\Sigma$-tensor norms on spaces (see \cite[Section 12.4]{defant93}). Roughly speaking, a finitely generated $\Sigma$-tensor norm on spaces $\alpha$ allows to calculate $\ab(u)$ of any $u$ in $\xtxty$ by considering those finite dimensional tensor products $\etetf$, where $E_i\subset X_i$ and $F\subset Y$, which contain $u$. As well as maximality for $\Sigma$-ideals we have to consider the reasonable crossnorm $\br$ and take into account the values $\alpha^\br(u;\eee, F)$ (see Remark \ref{betarestricted}).

\begin{definition}\label{finitelygenerated}
A $\Sigma$-tensor norm on spaces $\alpha$ is named finitely generated if
\[\ab(u;\xxx,Y):=\inf	\abr(u;\eee,F)	\]
where the infimum is taken over all $E_i\in\mathcal{F}(X_i)$, $1\leq i\leq n$ and $F\in\mathcal{F}(Y)$ such that $u$ is contained in $\etetf$.
\end{definition}

\subsubsection*{The Greatest and Least $\Sigma$-Tensor Norms on Spaces}

Examining properties $S1$ and $S_2$ of Definition \ref{tns} and motivated by the injective and projective tensor norms for two factors, we define for each election $\xxyb$ in the class $\ban$
\begin{align*}
\pi^\beta(u) &:=\inf \left\{  \sumim \beta(\pimqi)\|y_i\|  \;|\;  u=\sumim (\pimqi)\tens y_i \; ; \; p_i,\,q_i\in\sxx, y_i\in Y \right\}\\
\varepsilon^\beta(u)&:=\sup \left\{\;  |\lev \fhi\tens y^* , u \rev|  \;|\;  \|\fhi\|_\beta\leq1,\; \|y^*\|\leq 1 \;\right\}.
\end{align*}
for all $u\in \xtxty$.

Let $\pi$ denote the assignment defined as follows: For each election $(n,\xxx, Y, \beta)$ in the class $\ban$, $\pi$ assigns the norm $\pi^\beta$. Similarly define $\varepsilon$. Direct from definition we have that $\pi^\beta$ and $\varepsilon^\beta$ give rise to $\Sigma$-tensor norms on spaces. Moreover, in the next proposition we prove that $\pi$ and $\varepsilon$ are the greatest and least $\Sigma$-tensor norms on spaces, respectively.

\begin{proposition}
Let $\xxyb$ be an election in the class $\ban$. A norm $\ab$ on $\xtxty$ verifies S1 and S2 if and only if
\[\varepsilon^\beta(u)\leq \ab(u)\leq\pi^\beta(u)\qquad \forall\; u\in\xtxty.\]
\end{proposition}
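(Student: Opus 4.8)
The plan is to prove the two inequalities separately, showing that any norm $\ab$ satisfying S1 and S2 is squeezed between $\varepsilon^\beta$ and $\pi^\beta$, and conversely noting that $\pi^\beta$ and $\varepsilon^\beta$ themselves verify S1 and S2 (this latter fact was already asserted just before the statement, so strictly I only need the sandwich estimate).

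\textbf{Upper bound $\ab \leq \pi^\beta$.} Let $u \in \xtxty$ and fix any representation $u = \sumim (\pimqi) \tens y_i$ with $p_i, q_i \in \sxx$ and $y_i \in Y$. By the triangle inequality for the norm $\ab$ together with property S1 applied to each summand,
\[
\ab(u) \leq \sumim \ab\bigl((\pimqi)\tens y_i\bigr) \leq \sumim \beta(\pimqi)\,\|y_i\|.
\]
Taking the infimum over all such representations of $u$ gives $\ab(u) \leq \pi^\beta(u)$. (Note this also confirms that $\pi^\beta$ is finite, hence a genuine norm, since every element of $\xtxty$ admits such a representation because $\sxx$ spans $\xtx$.)

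\textbf{Lower bound $\varepsilon^\beta \leq \ab$.} Here the idea is that S2 says precisely that each functional $\fhi \tens y^*$ with $\|\fhi\|_\beta \leq 1$ and $\|y^*\| \leq 1$ lies in the unit ball of $(\xtxty, \ab)^*$, whence $|\lev \fhi\tens y^*, u\rev| \leq \ab(u)$ for every such pair. Taking the supremum over all admissible $\fhi$ and $y^*$ yields $\varepsilon^\beta(u) \leq \ab(u)$. I should also remark that $\varepsilon^\beta$ is indeed a norm (not merely a seminorm): if $\varepsilon^\beta(u) = 0$ then $\lev \fhi\tens y^*, u\rev = 0$ for all $\fhi \in \Lbxx$ and $y^* \in Y^*$; since functionals of the form $x_1^* \tens \cdots \tens x_n^*$ belong to $\Lbxx$ (with $\|\cdot\|_\beta$-norm controlled because $\beta$ is reasonable, $\beta \geq \varepsilon$) and these together with $Y^*$ separate points of $\xtxty$, we get $u = 0$.

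\textbf{Converse direction.} For the "if" direction, suppose $\varepsilon^\beta(u) \leq \ab(u) \leq \pi^\beta(u)$ for all $u$. Property S1 for $\ab$ follows immediately from the right inequality, since $\pi^\beta((\pmq)\tens y) \leq \beta(\pmq)\|y\|$ directly from the definition of $\pi^\beta$ (take the one-term representation). Property S2 for $\ab$ follows from the left inequality: for $\|\fhi\|_\beta \leq 1$, $\|y^*\| \leq 1$ we get $|\lev \fhi\tens y^*, u\rev| \leq \varepsilon^\beta(u) \leq \ab(u)$, so $\fhi\tens y^*$ is bounded with norm at most one on $(\xtxty, \ab)$; the general case $\|\fhi\tens y^*\| \leq \|\fhi\|_\beta\|y^*\|$ follows by homogeneity. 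I do not anticipate a serious obstacle: the only mild subtlety is the finiteness/nondegeneracy remarks ensuring $\pi^\beta, \varepsilon^\beta$ are actual norms, and checking that S1 indeed forces the triangle-inequality estimate summand-by-summand — both routine once one unwinds the definitions.
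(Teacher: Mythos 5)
Your proof is correct and is precisely the standard sandwich argument (triangle inequality plus S1 for the upper bound, S2 as membership in the dual unit ball for the lower bound, and the immediate converse) that the paper omits as "easily adapted from the case of two factors." The extra remarks on finiteness of $\pi^\beta$ and nondegeneracy of $\varepsilon^\beta$ are accurate and harmless additions.
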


The proof of this proposition just requires elementary tensor products techniques and are easily adapted from the case of two factors, so we omit it.

\begin{example} Let $\xxyb$ be an election in $\ban$. Define
\[\gamma^\beta(u)=\inf  \|(a_i-b_i)\|_2^\beta\,\|(y_i)\|_2\]
where the infimum is taken over all representation of the form $u=\sum_{i=1}^m(\pimqi)\tens y_i$, such that $p_i,q_i\in\sxx, y_i\in Y$ and $(p_i,q_i)\leq_\beta (a_i,b_i)$. Here, $\|(a_i-b_i)\|_2^\beta=\left(\sum_{i=1}^m \beta(p_i-q_i)^2\right)^{1/2}$ and $(p_i,q_i)\leq_\beta (a_i,b_i)$ means that $\sum_{i=1}^m |f_\fhi(p_i)-f_\fhi(q_i)|^2\leq \sum_{i=1}^m |f_\fhi(a_i)-f_\fhi(b_i)|^2 $ for all $\fhi$ in $\Lbxx$. The case $n=1$ can be consulted in \cite[Section 2.b]{pisier86}
\end{example}

\begin{example}{\bf The Laprest\'e $\Sigma$-Tensor Norms on Spaces.} We recall the next definition from \cite[Definition 4.1]{fernandez-unzueta18a}; the case $n=1$ is detailed in \cite[Section 12.5]{defant93}. Let $1\leq p,q\leq \infty$ such that $1/p+1/q\geq1$. Take the unique $r\in[1,\infty]$ with the property $1=1/r+1/q^*+1/p^*$. The Laprest\'e $\Sigma$-tensor norm on spaces $\alpha_{p,q}^\beta$ on $\xtxty$ is defined by
\[\alpha_{p,q}^\beta(u):=\inf  \|(\lambda_i)_{i=1}^m\|_r \|(\pimqi)_{i=1}^m\|_{q^*}^{w,\beta} \|(y_i)_{i=1}^m\|_{p^*}^w\]
where the infimum is taken over all representations of the form $u=\sum_{i=1}^m \lambda_i(p_i-q_i)\tens y_i$ with $\lambda_i\in \kk$, $p_i,q_i\in\sxx$, $y_i\in Y$ (see Example \ref{dominated}).

As it is well known, the Chevet-Saphar tensor norms are particular cases of the Laprest\'e tensor norm. The corresponding cases acquire the form
\begin{align*}
 d_p^\beta(u):=\alpha_{1,p}^\beta(u)&=\inf\left\{  \|(\pimqi)_i \|_{p}^{w,\beta}  \|(y_i)_i\|_{p^*}   \,\Big|\,  u=\sumim (\pimqi)\tens y_i  \right\}\\
 w_p^\beta(u):=\alpha_{p,p^*}^\beta(u)&=\inf\left\{  \|(\pimqi)_i \|_{p}^{w,\beta}  \|(y_i)_i\|_{p^*}^w   \,\Big|\,  u=\sumim (\pimqi)\tens y_i  \right\}\\
 g_p^\beta(u):=\alpha_{p,1}^\beta(u)&=\inf\left\{  \|(\pimqi)_i \|_{p}^\beta  \|(y_i)_i\|_{p^*}^w   \,\Big|\,  u=\sumim (\pimqi)\tens y_i  \right\}\\
 \alpha_{1,1}^\beta(u)=d_{1}^\beta(u)&=g_1^\beta(u)=\pi^\beta(u)
\end{align*}
\end{example}

The assignments $\pi$, $\varepsilon$, $\gamma_2$ and $\alpha_{p,q}$ are examples of finitely generated $\Sigma$-tensor norms on spaces. The cases $\pi$ and $\varepsilon$ follows from definition, the case $\gamma_2$ is \cite[Proposition 4.3]{fernandez-unzueta18b} and the Laprest\'e case is \cite[Proposition 4.3]{fernandez-unzueta18a}.

\subsection{$\mathbf{\Sigma}$-Tensor Norms on Duals}

In ideals of operators, not necessarily linear or multilinear, operators which have range contained in finite dimensional subspaces are of fundamental importance. As we saw in Section 3, $\Fbxxy\subset \Abxxy$ holds linearly for every $\Sigma$-ideal $\ideal$. It is not difficult to see that $\Fbxxy$ is linearly isomorphic to $\Lbxx\tens Y$. In this section we give a precise definition of those norms on $\Lbxx\tens Y$ which identify the space $\Fbxxy$ endowed with the norm $\Ab$ (see Theorem \ref{tndsideal} and \eqref{conclusion}).

The most technical requirement of the definition of $\Sigma$-tensor norms on duals is the uniform property. In Section 4.2 we saw that the uniform property for $\Sigma$-tensor norms on spaces requires the notion of the so called $\Sigma$-$\beta$-$\theta$-operators. Vaguely, for defining $\Sigma$-tensor norms on duals we need a dual concept of $\Sigma$-$\beta$-$\theta$-operators.

Let $n, m$ positive integers, $\xxx$, $\zzzm$ be Banach spaces and $\beta$ and $\theta$ be two reasonable crossnorms on $\xtx$ and $\ztzm$, respectively. If the linear operator $A:\Lbxx\into\Ltzzm$ is such that its adjoint linear operator $A^*:\ztzmt^{**}\into \xtxb^{**}$ verifies
\[A^*K_{\ztzt}(\szzm)\subset K_{\xtxb}(\sxx),\]
we say that $A$ {\bf preserves $\mathbf{\Sigma}$}.

\begin{definition}\label{tnd}
A {\bf $\mathbf{\Sigma}$-tensor norm on duals} $\nu$ on the class of Banach spaces assigns, to each election $\xxyb$ in $\ban$, a norm $\vb$ on the tensor product $\Lbxx\tens Y$ with the following properties:
\begin{itemize}
	\item  [D1] $\vb(\fhi\tens y)\leq\|\fhi\|_\beta \|y\|$ for every $\fhi\in\Lbxx,\; y\in Y.$
    \item  [D2] For every $p,q\in\sxx$ and $y^*\in Y^*$ the linear functional
    \begin{eqnarray*}
     (\pmq)\tens y^*:\Lbxx\tens Y &\into &		   \kk\\
	                \fhi\tens y &\mapsto & (f_\fhi(p)-f_\fhi(q))y^*(y)
    \end{eqnarray*}
    is bounded and $\|(\pmq)\tens y^*\|\leq\beta(\pmq)\|y^*\|$.
    \item  [D3] If $A:\Lbxx\into\Ltzzm$ and $B:Y\into W$ denote bounded linear operators where $A$ preserves $\Sigma$, then the linear operator
    \begin{eqnarray*}
    A\tens B:\left(\Lbxx\tens Y,\vb\right)     & \into &\left(\Ltzzm\tens W,\vt\right)\\
    \fhi\tens y& \mapsto &  A(\fhi)\tens B(y)
    \end{eqnarray*}
    is bounded and $\|A\tens B\|\leq\|A\|\,\|B\|$.
    \end{itemize}
\end{definition}

The next step int our development is to define a well behavior of $\Sigma$-tensor norm on duals in the general case of Banach spaces regarding finite dimensional spaces. To be explicit we define when a $\Sigma$-tensor norm on duals is cofinitely generated.

Consider Banach spaces $\xxx$ and finite dimensional subspaces $E_i$ of $X_i$ for $1 \leq i \leq n$. Let $R_{\eee}:\Lbxx\into \mathcal{L}^{\br}(\eee)$ be the linear operator which maps each $\fhi$ to its restriction to $\epe$ (see Remark \ref{betarestricted}). The analogous case of this property for the case $n=1$ is defined in \cite[Section 12.4]{defant93}.

\begin{definition}\label{cofinitelygenerated}
A $\stn$ on duals $\nu$ on the class $\ban$ is named cofinitely generated if
\[\vb(v;\Lbxx,Y):=\sup \vbr\left( R_{\eee}\tens Q_L (v); \Lbree, Y/L\right)\]
where the suprema is taken over all $E_i\in \mathcal{F}(X_i)$ and $L\in\mathcal{CF}(Y)$.
\end{definition}

\subsubsection*{The Greatest and the Least $\Sigma$-Tensor Norms on Duals}

For each election $\xxyb$ in $\ban$ define
\begin{align*}
\pi_\beta(v)&:= \inf \left\{\;\sumim \|\fhi\|_\beta \|y_i\| \;\Big|\; v=\sumim\fhi_i\tens y_i \;\right\}\\
\varepsilon_\beta(v)&:=\sup\left\{\;	  |\lev  (\pmq)\tens y^*  ,  v \rev|\;\Big|\;  \beta(\pmq)\leq 1,\, \|y^*\|\leq 1  \;\right\}
\end{align*}
for all $v\in\Lbxx\tens Y$. As it is expected, these norms are the greatest and least $\Sigma$-tensor norms on duals, respectively.

\begin{proposition}
Let $\xxyb$ be an election in the class $\ban$. A norm $\vb$ on the tensor product $\Lbxx\tens Y$ verifies D1 and D2 if and only if
\[\varepsilon_\beta(v)  \leq\vb (v)  \leq\pi_\beta(v)\quad \forall v\in \Lbxx\tens Y.\]
\end{proposition}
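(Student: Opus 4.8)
The plan is to mimic the classical proof that $\varepsilon$ and $\pi$ are the extremal reasonable crossnorms, adapting it to the $\Sigma$-setting on duals. First I would show that $\pi_\beta$ and $\varepsilon_\beta$ themselves verify D1 and D2, so that the inequalities are at least meaningful; this is immediate from the definitions since $\beta$ is a reasonable crossnorm (and it is the content of the remark the paper makes just before the statement). For the ``only if'' direction, assume $\vb$ is a norm on $\Lbxx\tens Y$ satisfying D1 and D2.

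For the upper estimate $\vb(v)\leq\pi_\beta(v)$, I would take an arbitrary representation $v=\sumim\fhi_i\tens y_i$, apply the triangle inequality for the norm $\vb$, and then bound each term using D1: $\vb(\fhi_i\tens y_i)\leq\|\fhi_i\|_\beta\|y_i\|$. Summing gives $\vb(v)\leq\sumim\|\fhi_i\|_\beta\|y_i\|$, and taking the infimum over all representations yields $\vb(v)\leq\pi_\beta(v)$. For the lower estimate $\varepsilon_\beta(v)\leq\vb(v)$, I would use duality: fix $p,q\in\sxx$ and $y^*\in Y^*$ with $\beta(\pmq)\leq1$ and $\|y^*\|\leq1$. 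Property D2 says the functional $(\pmq)\tens y^*$ on $(\Lbxx\tens Y,\vb)$ is bounded with norm at most $\beta(\pmq)\|y^*\|\leq1$. Hence $|\lev(\pmq)\tens y^*,v\rev|\leq\vb(v)$, and taking the supremum over all such $p,q,y^*$ gives $\varepsilon_\beta(v)\leq\vb(v)$.

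For the ``if'' direction, suppose $\varepsilon_\beta\leq\vb\leq\pi_\beta$ pointwise; I must check D1 and D2. Property D1 is just the upper bound $\vb(\fhi\tens y)\leq\pi_\beta(\fhi\tens y)\leq\|\fhi\|_\beta\|y\|$, using the one-term representation. For D2, I would fix $p,q\in\sxx$ and $y^*\in Y^*$ and estimate $|\lev(\pmq)\tens y^*,v\rev|$: the lower bound $\varepsilon_\beta(v)\leq\vb(v)$ gives, by definition of $\varepsilon_\beta$ as a supremum, that $|\lev(\pmq)\tens y^*,v\rev|\leq\beta(\pmq)\|y^*\|\,\varepsilon_\beta(v)\leq\beta(\pmq)\|y^*\|\,\vb(v)$ (after normalizing $\beta(\pmq)$ and $\|y^*\|$ to $1$, or absorbing the scalars), which is exactly the boundedness statement with the required norm bound.

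I do not expect a genuine obstacle here: this is the standard two-factor argument transcribed to the present notation, which is why the paper is entitled to omit it. The only points requiring a little care are bookkeeping ones — making sure $\pi_\beta$ is genuinely finite (equivalently, that $\varepsilon_\beta\leq\pi_\beta$, which follows by combining the two halves of the ``only if'' direction applied to $\pi_\beta$ and $\varepsilon_\beta$ themselves), and checking that $\varepsilon_\beta$ is genuinely a norm and not merely a seminorm, which uses that decomposable tensors $p-q$ together with $y^*$ separate points of $\Lbxx\tens Y$ via the pairing in D2. Both are routine tensor-product facts, adapted from the classical case of two factors exactly as the paper suggests.
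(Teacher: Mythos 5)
Your proof is correct and is precisely the standard two-factor argument that the paper omits: the triangle inequality plus D1 give the upper bound after taking the infimum over representations, D2 plus normalization (using that $\sxx$ is a cone, so $(p-q)/\beta(p-q)$ is again a difference of points of $\sxx$) gives the lower bound, and the converse implications follow by specializing to one-term representations and rescaling. The bookkeeping points you flag (finiteness of $\pi_\beta$, and that $\varepsilon_\beta$ is a norm because the functionals $(p-q)\tens y^*$ separate points of $\Lbxx\tens Y$) are handled exactly as you say, so nothing further is needed.
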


The proof of this proposition is easily adapted from the case of two factors, so we omit it.

\subsection{The Duality Relation between $\Sigma$-Tensor Norms in Finite Dimensions}

In this section we exhibit the relation between the two types of $\Sigma$-tensor norms in the class of finite dimensional normed spaces. As we will see, this relation is in a dual fashion.

In general, the linear spaces $\Lbxx\tens Y$ and $\xtxtyd$ have not the same dimension. For this reason, we do not expect to relate the two types of tensor norms in general Banach spaces in an  isomorphically fashion (see Theorem \ref{dt}). Nevertheless, the finite dimensional case is simpler and more illustrative.

\begin{theorem}\label{tnstotnd}
Every $\Sigma$-tensor norm on spaces $\alpha$ on the class $\finn$ defines a $\Sigma$-tensor norm on duals $\nu$  on the class $\finn$ by
\begin{equation}\label{linearduality}
 \left(\Lbee\tens F,\vb\right):=\left(\etetfd,\ab\right)^*.
\end{equation}
Every $\Sigma$-tensor norm on duals $\nu$ on the class $\finn$ defines a $\Sigma$-tensor norm on spaces $\alpha$ on the class $\finn$ by
\[\left(\etetf,\ab\right):=\left(\Lbee\tens F^*,\vb\right)^*\]
\end{theorem}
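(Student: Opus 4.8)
The plan is to verify, in each of the two directions, that the norm defined by duality on the relevant finite-dimensional tensor product satisfies the three axioms D1--D3 (respectively S1--S3). Since we are working over $\finn$, the spaces $\Lbee$ and $F^*$ are finite dimensional, so $\etetfd$ and $\Lbee\tens F$ are finite dimensional with the correct dimensions, and the pairing $\lev \fhi\tens y , \psi\tens f^* \rev := f_\fhi(\ldots)\ldots$ is a perfect duality; hence $(\etetfd,\ab)^*$ is genuinely a norm on $\Lbee\tens F$ (after identifying $(F^*)^*$ with $F$), and dually $(\Lbee\tens F^*,\vb)^*$ is a norm on $\etetf$ (using $(\Lbee)^*\cong\etetf$ in finite dimensions via the linearization isometry recalled in Section 2). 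So the content is purely that the axioms translate into each other under dualization.

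For the first assertion, I would first observe that S1 for $\alpha^\beta$ on $\etetfd$ is exactly the statement that the functionals $(p-q)\tens y^*$ (viewing $y^*\in F^*$) have $\alpha^\beta$-dual norm at most $\beta(p-q)\|y^*\|$; but these functionals are, under the identification $(\etetfd,\ab)^* = \Lbee\tens F$, precisely the evaluation functionals appearing in D2, so S1 for $\alpha$ gives D2 for $\nu$. Conversely, S2 for $\alpha$ says the functionals $\fhi\tens y^{**}=\fhi\tens K_F(f)$ on $\etetfd$ have norm $\leq \|\fhi\|_\beta\|f\|$; these are exactly the elementary tensors $\fhi\tens f$ in $\Lbee\tens F$, so S2 gives D1. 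The uniform property D3 for $\nu$ I would obtain from S3 for $\alpha$ by taking adjoints: given $A\colon\Lbee\to\mathcal{L}^\theta(\zzzm)$ preserving $\Sigma$ and $B\colon F\to W$, one checks that $A^*|_{\szzm}$ (restricted appropriately) together with $B^*\colon W^*\to F^*$ forms the data of a $\Sigma$-$\beta$-$\theta$-operator paired with a bounded linear operator as required by S3, so that $(A\tens B)^* = (f_{A^*}\tens B^*)$ up to the canonical identifications; since $\|A\tens B\| = \|(A\tens B)^*\|$ in finite dimensions, the norm estimate $\|f_{A^*}\tens B^*\|\leq\|A\|\,\|B\|$ from S3 yields D3. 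The second assertion is proved symmetrically: D1 and D2 for $\nu$ dualize to S2 and S1 for $\alpha$, and D3 dualizes to S3, again by passing to adjoints and matching the "$A$ preserves $\Sigma$" condition on one side with the "$f_R$ is a $\Sigma$-$\beta$-$\theta$-operator" condition on the other.

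The step I expect to require the most care is the bookkeeping in D3 $\leftrightarrow$ S3, namely checking that the adjoint of a $\Sigma$-preserving operator (as in the definition preceding Definition \ref{tnd}) is exactly the linearization of a $\Sigma$-$\beta$-$\theta$-operator (as in the diagram \eqref{idealproperty}), with matching norms, and vice versa. In finite dimensions the bidual identifications are trivial, so the condition $A^*K_{\ztzt}(\szzm)\subset K_{\xtxb}(\sxx)$ simplifies to $A^*(\szzm)\subset\sxx$, which is precisely the image condition defining a $\Sigma$-$\beta$-$\theta$-operator $f_R = A^*|_{\szzm}$ with linearization $\rlin = A^*$, hence $\|\rlin\| = \|A^*\| = \|A\|$. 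Once this dictionary is set up, matching the tensor-product operators $A\tens B$ and $f_R\tens S$ under the duality $(\etetfd,\ab)^* = \Lbee\tens F$ and using $\|u\| = \|u^*\|$ for operators between finite-dimensional normed spaces closes both directions. Everything else is the routine verification that a dual norm is a norm and that the stated identifications are the natural ones, which I would state without belaboring the computations.
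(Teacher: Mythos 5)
Your proposal is correct and follows essentially the same route as the paper: identify $(\etetfd,\ab)^*$ with $\Lbee\tens F$ via the perfect finite-dimensional pairing, match S2 with D1 and S1 with D2 through biduality, and obtain D3 from S3 (and vice versa) by observing that in finite dimensions the adjoint of a $\Sigma$-preserving operator $A$ restricts to a $\Sigma$-$\beta$-$\theta$-operator with linearization $A^*$ and $\|A^*\|=\|A\|$, so the tensor-product operators are adjoint to one another. The paper writes out only the spaces-to-duals direction (declaring the converse analogous) and phrases the D3 step as an explicit pairing computation $\lev A\tens B(v),u\rev=\lev v, f_T\tens B^*(u)\rev$, but this is the same adjoint bookkeeping you describe.
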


Before proving this theorem let us make an observation: If we restrict \eqref{linearduality} to the case $n=1$ we obtain $(E^*\tens F,v)=(E\tens F^*,\alpha)^*$, or equivalently, $(E\tens F,v)=(E^*\tens F^*,\alpha)^*$ for all finite dimensional normed spaces. This last equation is nothing more than the construction of the dual tensor norm $\nu=\alpha'$ of $\alpha$ for the case of two factors. Under this viewpoint, we may consider \eqref{linearduality} as an extension of the procedure to define the dual tensor norm of a given tensor norm. In our setting we have that the dual norm of a $\Sigma$-tensor norm on spaces is a $\Sigma$-tensor norm on duals (and reciprocally).

\begin{proof}
We only proof that \eqref{linearduality} gives rise to a $\Sigma$-tensor norm on duals since the opposite direction is analogous. Let $\alpha$ be a $\Sigma$-tensor norm on spaces and let $\eefb$ be an election in $\finn$.

{\bf $\vb$ verifies D1}: S2 is equivalent to D1 since for every $\fhi\in\Lbee$ and $y\in F$ we have $\vb(\fhi\tens y):= \|\fhi\tens y  :  \left(\etetfd,\ab\right) \into \kk \|\leq \|\fhi\|_\beta\, \|y\|$.

{\bf$\vb$ verifies D2}: Since the involved spaces are finite dimensional we have that the functionals defined by $(\pmq)\tens y^*$ are bounded. The definition of $\vb$ implies
\[\sup\limits_{\vb(v)\leq 1}|\lev (\pmq)\tens y^* \,,\, v \rev|=\ab((\pmq)\tens y^*)\leq\beta(\pmq)\ \|y^*\|.\]

{\bf$\vb$ verifies D3}: Let $(m,M_1, \dots, M_m, N, \theta)$ be an election in the class $\finn$. Consider a bounded linear operator that preserves $\Sigma$ $A:\Lbee\into\mathcal{L}^\theta(M_1, \dots, M_m)$ and a bounded linear operator $B:N\into F$. The finite dimensional assumption lets us consider $A^*:(M_1\tens \cdots\tens M_m,\theta)\into (E_1\tens \cdots\tens E_n,\beta)$. The linearity of $A^*$ lets us define the multilinear operator
\begin{eqnarray*}
T:M_1\times\dots\times M_m &\into &   (E_1\tens \cdots\tens E_n,\beta)\\
                               (z_1,\dots, z_m) &\mapsto & A^*(z_1\tens\cdots\tens z_m).
\end{eqnarray*}
The universal property of the tensor product implies $\tlin=A^*$. Hence, $\tlin:(M_1\tens\cdots\tens M_m,\theta)\into (\ete,\beta)$ is bounded and $\|\tlin\|=\|A\|$. Even more, the set $\{ A^*(p) \,|\,  p\in\Sigma_{M_1,\dots, M_m} \}$ is contained in $\Sigma_{\eee}$ since $A$ preserves $\Sigma$. In other words, the $\Sigma$-operator $\ft:\Sigma_{M_1,\dots, M_m}^\theta \into (E_1\tens \cdots\tens E_n,\beta)$ is a $\Sigma$-$\beta$-$\theta$-operator. The uniform property of $\alpha$ implies that $\ft\tens B^* :\left(M_1,\tens\cdots\tens, M_m\tens N,\alpha^\theta\right) \into   \left(\etetfd,\ab\right)$ is bounded and $\|\ft\tens B^*\|\leq\|A\|\,  \|B^*\|$.

The boundedness of $A\tens B:\left(\Lbee\tens F,\vb\right)\into \left(\mathcal{L}^\theta(M_1,\dots, M_n)\tens N,\vt\right)$ is deduced from
\begin{align*}
|\lev A\tens B (v)\,,\, u\rev\big| =\Big|\lev v\,,\, \ft\tens B^*(u)\rev\big|&\leq \vb(v)\,\ab\left(\ft\tens B^*(u) ; \eee, F^*\right)\\
            &\leq \vb(v)\,\|A\|\,\|B\|\,\alpha^\theta(u  ; \mmm, N^*).
\end{align*}
\end{proof}

\begin{example}
Adapting the case of two factors leads us to the fact that $\varepsilon_\beta$ is dual to $\pi^\beta$. In other words they verify a relation as in \eqref{linearduality}. In this particular case we can say more: the space $ (\Lbxx\tens Y ,\varepsilon_{\beta})$ is isometrically contained in the dual space $(\xtxtyd,\pi^\beta)^*$ via the morphism given by evaluation for all election $(n,\xxx,Y,\beta)$ in $\ban$ (see Sections 4.1.1, 4.2.1, Theorem \ref{dt} and \eqref{conclusion}).
\end{example}

\subsection{$\mathbf{\Sigma}$-Tensor Norms and $\mathbf{\Sigma}$-Ideals in the Finite Dimensions}

Next, we show that each $\Sigma$-tensor norm (on duals) gives rise to a unique $\Sigma$-ideal in finite dimensions. For the linear case of this relation the reader may check \cite[Sections 17.1 and 17.2]{defant93}

\begin{theorem}\label{tndsideal}
Every $\Sigma$-tensor norm on duals $\nu$ on $\finn$ defines a $\Sigma$-ideal $\ideal$ on $\finn$ by
\[\Abeef:=\left(\Lbee\tens F,\vb\right).\]
Conversely, every $\Sigma$-ideal $\ideal$ on $\finn$ defines a $\stn$ on duals $\nu$ on $\finn$, by
\[\left(\Lbee\tens F,\vb\right):=\Abeef.\]
\end{theorem}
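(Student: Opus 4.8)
The plan is to verify, in each of the two directions, that the three defining axioms of one structure translate into the three defining axioms of the other. The equivalence is set up through the linear isomorphism $\Fbxxy\cong\Lbxx\tens Y$ (valid in finite dimensions, where every bounded multilinear operator has finite-rank range), under which a rank-1 tensor $\fhi\tens y$ corresponds to the operator $\fhi\cdot y$. So throughout, the norm $\At$ on the component is by definition transported from $\vt$ (or conversely), and the whole content is a dictionary between the tensor-norm axioms D1, D2, D3 and the ideal axioms I1, I2, I3. I would organize the proof as two symmetric blocks, doing one direction in detail and remarking that the other is the formal inverse.

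First I would treat the direction $\nu\rightsquigarrow[\mathcal{A},A]$. Axiom I1 is immediate: a rank-1 operator $\fhi\cdot y$ is exactly the tensor $\fhi\tens y$, and $\At(\fhi\cdot y)=\vt(\fhi\tens y)\le\|\fhi\|_\theta\|y\|$ is precisely D1. For I2 I would use D2: given $T=\sum_i\fhi_i\cdot y_i\in\Abeef$, one has $\Lb(f_T)=Lip^\beta(f_T)$, and the Lipschitz constant of $f_T$ on the Segre cone is computed by testing against pairs $p,q\in\sxx$ and functionals $y^*\in B_{Y^*}$; the value $\langle (p-q)\tens y^*,T\rangle=\sum_i(f_{\fhi_i}(p)-f_{\fhi_i}(q))y^*(y_i)$ is bounded by $\vt(T)\,\beta(p-q)\,\|y^*\|$ exactly because the functional $(p-q)\tens y^*$ has norm $\le\beta(p-q)\|y^*\|$ by D2 — so $\Lb(f_T)\le\vt(T)=\At(T)$. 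For I3 I would take a composition $Sf_TR$ with $R$ a $\Sigma$-$\beta$-$\theta$-operator and $S:Y\to W$ linear, write $T=\sum_i\fhi_i\cdot y_i$, and observe $Sf_TR=\sum_i(\fhi_i\circ\rlin|_{\sxx})\cdot S(y_i)$; the key point is that precomposition by a $\Sigma$-$\beta$-$\theta$-operator sends $\Lbxx$ into $\Ltzzm$ and corresponds on the level of adjoints to an operator that \emph{preserves} $\Sigma$, so that $A\tens S$ with $A(\fhi)=\fhi\circ\rlin|_{\sxx}$ is the operator appearing in D3; then D3 gives $\At(Sf_TR)=\vt\big((A\tens S)(T)\big)\le\|A\|\,\|S\|\,\vt(T)\le\|\rlin\|\,\|S\|\,\At(T)$, using $\|A\|\le\|\rlin\|$. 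The reverse direction $[\mathcal{A},A]\rightsquigarrow\nu$ runs the same equivalences backwards: D1 is I1 on rank-1 tensors; D2 is extracted from I2 by the same Lipschitz-constant computation; D3 is I3 applied with $R$ the $\Sigma$-$\beta$-$\theta$-operator whose linearization is the adjoint $A^*$ of the given $\Sigma$-preserving $A$ (this is exactly the construction used in the proof of Theorem \ref{tnstotnd}).

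The one point requiring genuine care — the main obstacle — is the precise correspondence in I3/D3 between, on the one hand, a $\Sigma$-$\beta$-$\theta$-operator $f_R$ and its induced precomposition map $A\colon\Lbxx\to\Ltzzm$, $\fhi\mapsto\fhi\circ f_R$ (or more precisely $\widetilde{\fhi\circ f_R}$), and, on the other hand, the requirement that $A$ preserve $\Sigma$, i.e.\ $A^*K_{\ztzt}(\szzm)\subset K_{\xtxb}(\sxx)$. I would check that $A$ is bounded with $\|A\|\le\|\rlin\|$ (its linearization factors through $\rlin$), that $A^*$ restricted to the canonical images of Segre cones is just the map induced by $\rlin|_{\szzm}$ followed by $f_R$, and hence lands in $K_{\xtxb}(\sxx)$ precisely because the image of $f_R$ lies in $\sxx$. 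Conversely, given a $\Sigma$-preserving $A$ in finite dimensions, $A^*$ is (after identifying bidual with the space) a bounded linear map $(\ztzm,\theta)\to(\xtx,\beta)$ carrying $\szzm$ into $\sxx$, so it is the linearization of a $\Sigma$-$\beta$-$\theta$-operator, which is the hook that lets I3 be applied. Once this identification is pinned down, both directions are routine, and I would state the verification of I1/I2 (resp.\ D1/D2) tersely and spend the written proof mostly on the I3$\Leftrightarrow$D3 equivalence.
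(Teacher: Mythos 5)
Your proposal is correct and follows essentially the same route as the paper: transport the norm through the isomorphism $\fhi\tens y\mapsto\fhi\cdot y$, get I1 from D1 directly, get I2 from D2 by testing $f_T(p)-f_T(q)$ against $y^*\in B_{Y^*}$ and identifying $\lev y^*,f_T(p)-f_T(q)\rev$ with $\lev (p-q)\tens y^*, v_T\rev$, and get I3 from D3 by recognizing that $Sf_TR$ corresponds to $(\rlin^*\tens S)(v_T)$ with $\rlin^*$ preserving $\Sigma$. Your extra care on the I3/D3 dictionary (checking $\|A\|\le\|\rlin\|$ and that $A^*$ restricted to Segre cones is induced by $\rlin$) is exactly the point the paper asserts more tersely, and your converse direction matches the paper's remark that it is analogous, using the same construction as in Theorem \ref{tnstotnd}.
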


\begin{proof}
We only prove that a $\Sigma$-tensor norm on duals defines a $\Sigma$-ideal since the converse is completely analogous. Let $\nu$ be a $\stn$ on duals. Plainly, the assignment
$\fhi\tens y\mapsto \fhi\cdot y$ defines a linear isomorphism between $\Lbee\tens F$ and $\Abeef$.

\textbf{$\ideal$ verifies I1}: Immediate from D1.

\textbf{$\ideal$ verifies I2}: Let $T=\sum_{i=1}^l\fhi_i\cdot y_i$ in $\Abeef$ and take $p,q\in\Sigma_{\eee}$ and $y^*\in Y^*$. Algebraic manipulations lead to $\lev  y^*  ,  f_T(p)-f_T(q) \rev = \lev  (\pmq)\tens y^*  \,,\,  \sum_{i=1}^l\fhi_i\tens y_i  \rev$. Hence, Property D2 of $\nu$ and $\vb(v_T)=\Ab(T)$ assert that
\[|\lev  y^*  ,  f_T(p)-f_T(q) \rev|\leq \beta(\pmq)\,  \|y^*\|\,  \Ab(T).\]

%\textbf{$\ideal$ verifies I2}: Let $T=\sum_{i=1}^l\fhi_i\cdot y_i$ in $\Abeef$ and take $p,q\in\Sigma_{\eee}$ and $y^*\in Y^*$. Property D2 of $\nu$ implies 
%\begin{align*}
%|\lev  y^*  ,  f_T(p)-f_T(q) \rev| &=    \left|\sumil \left(f_{\fhi_i}(p)-f_{\fhi_i}(q)\right)y^*(y)\right|\\
%                                             &=    \left|\lev  (\pmq)\tens y^*  \,,\,  \sumil\fhi_i\tens y_i  \rev\right|\\
%                                              &\leq  \beta(\pmq)\,  \|y^*\|\,  \vb(v_T)\;\\
%                                                 &=    \beta(\pmq)\,  \|y^*\|\,  \Ab(T).\\
%\end{align*}
%We are done after taking suprema over all $\|y^*\|\leq 1$ and $\beta(\pmq)\leq 1$.

\textbf{$\ideal$ verifies I3}: Consider the composition
\[\begin{array}{c}
\xymatrix{
\Sigma_{M_1\dots M_m}\ar[r]^-\fr   &   \Sigma_{\eee} \ar[r]^-{f_T}   &   F\ar[r]^-S   &   N \\
}
\end{array},\]
where $\fr$ is a $\Sigma$-$\beta$-$\theta$-operator, $T$ is in $\Abeef$ and $S:F\into N$ is a bounded linear operator. According to the isomorphism between $\mathcal{A}^\theta(M_1\dots M_m;N)$ and $\mathcal{L}^\theta(M_1\dots M_m)\tens N$ we have that the multilinear operator $Sf_T R$ is associated with $(\rlin^*\tens S) (v_T)$. The linear operator $\rlin^*: (\ete,\beta)^* \into (M_1\tens\cdots\tens M_m,\theta)^*$ preserves $\Sigma$ since $f_R$ is a $\Sigma$-$\beta$-$\theta$-operator. Hence, applying D3 we obtain $\At(S f_T R)  =     \vt\left(  \rlin^*\tens S (v_f) \right) \leq  \|\rlin\|\,  \|S\|\, \vb(v_f) = \|\rlin\|\,  \|S\|\,  \Ab(T)$.
\end{proof}

%%%%%%%%%%%%%%%%%%%%%%%%%%%%%%%%%%%%%%%%%%%%%%%%%%%%%%%%%%%%%%%%%%%%%%%%%%%%%%%   Main results   %%%%%%%%%%%%%%%%%%%%%%%%%%%%%%%%%%%%%%%%%%%%%%%%%%%%%%%%%%%%%%%%%%%%%%%%%%%%%%%%%%

\section{Duality and Representation Theorems}

In sections 4.3 and 4.4 we established the relation between $\Sigma$-tensor norms and $\Sigma$-ideals in the class $\finn$. Next we extend these relations for Banach spaces. The outcomes are the duality theorem for $\Sigma$-tensor norms and the representation theorem for maximal $\Sigma$-ideals.

%In this section we present the most relevant results. Here we prove the relation between the two types of $\Sigma$-tensor norms in the general case of Banach spaces, the Duality Theorem for $\Sigma$-tensor norms. We continue precising the relation between $\Sigma$-tensor norms and $\Sigma$-ideals of multilinear operators on finite dimensions. This relation is extended to the general case of Banach spaces, the Representation Theorem for maximal $\Sigma$-ideals.

\subsection{The Duality Theorem for $\Sigma$-Tensor Norms}

As we saw in Theorem \ref{tnstotnd} the two types of $\Sigma$-tensor norms are in duality relation on the class of finite dimensional normed spaces. For extending this relation to general Banach spaces we need to extend a given $\Sigma$-tensor norms on spaces on finite dimensional normed spaces to Banach spaces.

Let $\alpha$ be a $\Sigma$-tensor norm on spaces defined on the class $\finn$. Let $(n,\xxx,Y)$ be an election in $\ban$. For every $u$ in $\xtxty$ define
\[\ab(u;\xxx,Y):=\inf	\abr(u;\eee,F)	\]
where the infimum is taken over all $E_i\in\mathcal{F}(X_i)$, $1\leq i\leq n$ and $F\in\mathcal{F}(Y)$ such that $u$ is contained in $\etetf$. Following the definition and using elementary tensor product techniques it is possible to prove that $\alpha$ is a finitely generated $\Sigma$-tensor norm on spaces on the class $\ban$.

According to previous definition and Theorem \ref{tnstotnd}, every $\Sigma$-tensor norm on duals $\nu$ in the class $\ban$ gives rise to a unique finitely generated $\Sigma$-tensor norm on spaces $\alpha$ on the class $\ban$ such that $\left(\etetf,\ab\right)$ is isometrically isomorphic to the dual space $\left(\Lbee\tens F^*,\vb\right)^*$ for all election $(n,\eee, F)$ in $\finn$.

\begin{theorem}\label{dt}{\bf (Duality Theorem of $\Sigma$-Tensor Norms)}
Let $\nu$ be a cofinitely generated $\Sigma$-tensor norm on duals and let $\alpha$  be the finitely generated $\Sigma$-tensor norm on spaces  defined by $\nu$. Then $\left(\Lbxx\tens Y,\vb\right)$ is isometrically contained in $\left(\xtxtyd,\ab\right)^*$ for all elections $\xxyb$ in $\ban$.
\end{theorem}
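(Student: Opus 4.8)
The plan is to verify, for a fixed election $\xxyb$ in $\ban$, that the natural evaluation map $J:\left(\Lbxx\tens Y,\vb\right)\hookrightarrow\left(\xtxtyd,\ab\right)^*$ is a well-defined isometry. That $J$ is a well-defined contraction is essentially automatic: by property D2 of $\nu$ the functionals $(\pmq)\tens y^*$ are bounded on $\Lbxx\tens Y$, and dualizing, each $v\in\Lbxx\tens Y$ acts boundedly on the generating tensors $(\pmq)\tens y^*$ of $\xtxtyd$; one then checks the pairing is compatible with the definition of $\ab$ via S1 and the Laprest\'e-type estimates already recorded, so $\|J(v)\|\leq\vb(v)$. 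The substance of the theorem is the reverse inequality $\vb(v)\leq\|J(v)\|$.

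The key idea for the reverse inequality is to localize using the two generation hypotheses. Fix $v\in\Lbxx\tens Y$; since $v$ is a finite sum of elementary tensors $\fhi_i\tens y_i$, there are finite dimensional subspaces $F\in\mathcal{F}(Y)$ with all $y_i\in F$, and, working with the $\fhi_i$, finite dimensional subspaces $E_i\in\mathcal{F}(X_i)$ so that $v$ lies in the image of $\Lbree\tens F$ under the natural maps; more precisely, I would use the restriction operators $R_{\eee}:\Lbxx\to\mathcal{L}^{\br}(\eee)$ and the quotient $Q_F:Y\to Y/F^{\perp}$-type constructions appearing in Definition~\ref{cofinitelygenerated}. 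Because $\nu$ is cofinitely generated,
\[
\vb(v)=\sup_{E_i,L}\ \vbr\!\left(R_{\eee}\tens Q_L(v);\Lbree,Y/L\right),
\]
so it suffices to bound each finite dimensional term $\vbr\!\left(R_{\eee}\tens Q_L(v)\right)$ by $\|J(v)\|$. On the finite dimensional election $(n,\eee,Y/L,\br)$ Theorem~\ref{tnstotnd} gives the isometric identity $\left(\Lbree\tens Y/L,\vbr\right)=\left((E_1\tens\cdots\tens E_n\tens (Y/L)^*,\abr\right)^*$, so that finite dimensional norm equals a supremum of pairings $|\langle R_{\eee}\tens Q_L(v),w\rangle|$ over $w$ in the unit ball of $\left(E_1\tens\cdots\tens E_n\tens(Y/L)^*,\abr\right)$.

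The last step is to transport such a test element $w$ back up to $\left(\xtxtyd,\ab\right)$ without increasing its norm, so that the pairing $\langle R_{\eee}\tens Q_L(v),w\rangle$ equals $\langle v,\tilde w\rangle=\langle J(v),\tilde w\rangle$ for a corresponding $\tilde w$ with $\ab(\tilde w)\leq\abr(w)$. The map $\tilde w$ is produced by composing the inclusions $I_{E_i}:E_i\hookrightarrow X_i$ with the adjoint $Q_L^*:(Y/L)^*\hookrightarrow Y^*$: at the level of tensor norms on spaces, property S3 (metric mapping) of $\alpha$ applied to the $\Sigma$-$\br$-$\beta$-operator induced by the inclusions, together with the behaviour of the $Y^*$-slot, yields $\ab(I_{E_1}\tens\cdots\tens I_{E_n}\tens Q_L^*(w))\leq\abr(w)$; and since $\alpha$ is finitely generated, this finite dimensional estimate is exactly what controls $\ab$ on $\xtxtyd$. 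Pairing $v$ against $\tilde w$ reproduces the pairing of $R_{\eee}\tens Q_L(v)$ against $w$ by naturality of the evaluation, so $|\langle J(v),\tilde w\rangle|$ both bounds the finite dimensional norm from below and is bounded by $\|J(v)\|\,\ab(\tilde w)\leq\|J(v)\|\,\abr(w)$. Taking suprema over $w$ and then over all $E_i$ and $L$ gives $\vb(v)\leq\|J(v)\|$.

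I expect the main obstacle to be the bookkeeping in the last step: matching the restriction/quotient operators appearing on the "duals" side (Definition~\ref{cofinitelygenerated}) with the inclusion/adjoint-quotient operators on the "spaces" side (property S3 and finite generation), and checking that the induced maps genuinely preserve $\Sigma$ / are $\Sigma$-$\br$-$\beta$-operators so that the metric mapping properties apply with constant $1$. The injectivity-type estimates needed to realize $w$ as coming from a norm-one element upstairs — i.e. that passing to finite dimensional subspaces and cofinite dimensional quotients does not shrink the relevant $\alpha$-balls — are where the hypothesis that $\alpha$ is the finitely generated norm \emph{defined by} $\nu$ (rather than an arbitrary finitely generated norm) is used, and that compatibility is the delicate point to get exactly isometric rather than merely isomorphic.
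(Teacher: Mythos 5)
Your reverse inequality $\vb(v)\leq\|J(v)\|$ follows the paper's argument essentially verbatim: cofinite generation of $\nu$ reduces the computation to the finite dimensional quantities $\vbr(R_{\eee}\tens Q_L(v))$, the finite dimensional duality of Theorem \ref{tnstotnd} produces test elements $u\in \ete\tens (Y/L)^*$ with $\abr(u)<1$, and finite generation of $\alpha$ (equivalently, your appeal to the metric mapping property for the inclusions $I_{E_i}$ and $Q_L^*$) transports these test elements into $\left(\xtxtyd,\ab\right)$ without increasing their norm. That half is correct.

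The gap is in the forward inequality $\|J(v)\|\leq\vb(v)$, which you dismiss as ``essentially automatic'' from D2 and S1. The argument you sketch --- bounding $|\langle v,(\pmq)\tens y^*\rangle|$ by $\vb(v)\,\beta(\pmq)\,\|y^*\|$ via D2 and summing over a representation of $u$ --- only yields $|\langle v,u\rangle|\leq \vb(v)\,\pi^\beta(u)$, i.e.\ boundedness of $J(v)$ on $\left(\xtxtyd,\pi^\beta\right)$. Since $\ab\leq\pi^\beta$, the unit ball of $\ab$ is \emph{larger} than that of $\pi^\beta$, so this does not control $\|J(v)\|$ as a functional on $\left(\xtxtyd,\ab\right)$; for a general $\alpha$ the two dual norms genuinely differ. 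The correct argument requires the same localization machinery as the reverse direction: given $u$ with $\ab(u)<1$, finite generation of $\alpha$ supplies $E_i\in\mathcal{F}(X_i)$ and $F\in\mathcal{F}(Y^*)$ with $u\in\etetf$ and $\abr(u)<1$; taking $L$ to be the pre-annihilator of $F$ in $Y$, so that $F$ is isometric to $(Y/L)^*$, one rewrites $\langle v,u\rangle=\langle R_{\eee}\tens Q_L(v),u\rangle$ and invokes the finite dimensional identity $\left(\Lbree\tens Y/L,\vbr\right)=\left(\ete\tens(Y/L)^*,\abr\right)^*$ --- precisely the hypothesis that $\alpha$ is the norm \emph{defined by} $\nu$ --- together with $\vbr(R_{\eee}\tens Q_L(v))\leq\vb(v)$ (from cofinite generation, or from D3). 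So the forward direction consumes the full strength of the hypotheses rather than just D2/S1, and as written your proof of it would fail.
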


\begin{proof}
Let $\xxyb$ be an election in $\ban$. Let $v=\sum_{j=1}^l \fhi_j\tens y_j$ in $\Lbxx\tens Y$. If $\ab(u; \xxx, Y)<1$ then there exist  $E_i\in\mathcal{F}(X_i)$, $1\leq i \leq n$ and $F\in\mathcal{F}(Y^*)$ such that $u\in\etetf$ and $\abr(u; \eee, F)<1$. The space $L=\{y  \,|\, y^*(y)=0 \mbox{ for all } y^*\in F\}$ is an element of $\mathcal{CF}(Y)$ such that $F$ is isometric to $(Y/L)^*$. Take any representation of $u$ in $\etetf$ of the form $\sum_{i=1}^m(\pimqi)\tens y_i^*$. Then, algebraic manipulations show that $\lev  v  ,  u  \rev = \lev  R_{\eee}\tens Q_L(v)  ,  \sum_{i=1}^m(\pimqi)\tens z_i^* \rev$ for some $z_i^*\in (Y/L)^*$, $1\leq i\leq m$. By assumption, the space $(\Lbree\tens Y/L,\vbr)$ is isometric to $(  \ete\tens (Y/L)^*,\abr)^*$. Hence, $|\lev  v  ,  u  \rev|\leq \vbr(R_{\eee}\tens Q_L(v)) \, \abr\left(\sum_{i=1}^m (\pimqi)\tens z_i^*\right).$ Since $\abr\left(\sum_{i=1}^m (\pimqi)\tens z_i^*\right)<1$, we have that $|\lev  v  ,  u  \rev|\leq \vb\left(v;\,  \Lbxx, Y\right)$. After taking suprema over all $\ab(u)<1$ in last inequality  we obtain
\[\|v:\left(\xtxtyd,\ab\right)\into \kk\|\leq\vb\left(v;\,  \Lbxx Y\right).\]

%Then,
%\begin{align*}
%|\lev  v  ,  u  \rev| &=   \left|  \sumijm \fhi_j(\pimqi)y_i^*(y_j) \right|\\
%								&=      \left|\sumijm R_{\eee}(\fhi_j)(\pimqi)z_i^*(Q_Ly_j)\right|\\
%								&=       \left|\lev  \sumjl R_{\eee}(\fhi_j)\tens Q_Ly_j  ,  \sumim(\pimqi)\tens z_i^* \rev\right|\\
%								&=       \left|\lev  R_{\eee}\tens Q_L(v)  ,  \sumim(\pimqi)\tens z_i^* \rev\right|\\
%								&\leq   \vbr(R_{\eee}\tens Q_L(v))  \abr\left(\sumim (\pimqi)\tens z_i^*\right)\\
%								&\leq   \vb\left(v;\,  \Lbxx, Y\right).
%\end{align*}
%Where the first inequality is due to the isometry between $(\Lbree\tens Y/L,\vbr)$ and $(  \ete\tens (Y/L)^*,\abr)^*$.

For the converse inequality, let $E_i\in\mathcal{F}(X_i)$, $1 \leq i \leq n$, $L\in\mathcal{CF}(Y)$ and $\eta>0$. There exist $u$ in the space $ E_1\tens\cdots E_n\tens (Y/L)^*$ such that $\abr(u;  \eee, (Y/L)^*)<1$ and $\vbr(R_{\eee}\tens Q_L(v),\Lbree, Y/L)\, (1-\eta)\leq |\lev  R_{\eee}\tens Q_L(v)  \,,\,  u\rev|$. The space $(Y/L)^*$ is isometrically isomorphic to a finite dimensional subspace of $F$ of $Y^*$. Then, after algebraic manipulations, we have
\[|\lev  R_{\eee}\tens Q_L(v)  \,,\,  u\rev| = |\lev  v  ,  u  \rev| \leq \|v:\left(\xtxtyd,\ab\right)\into \kk\|.\]
%\begin{align*}
%|\lev  R_{\eee}\tens Q_L(v)  \,,\,  u\rev| &= \left|\sumijm R_{\eee}(\fhi_j)(\pimqi)z_i^*(Q_Ly_i)\right|\\
%                                                               &=   \left|\sumijm \fhi_j(\pimqi)y_i^*(y_i)\right|\\
%                                                                &=   |\lev  v  ,  u  \rev|\\
%                                                          &\leq  \|v:\left(\xtxtyd,\ab\right)\into \kk\|.
%\end{align*}
After taking suprema over all $E_i$, $1 \leq i \leq n$ and $L$ as above we obtain that
\[\vb(u;\  \Lbxx, Y)(1-\eta)\leq\|v:\left(\xtxtyd,\ab\right)\into \kk\|\]
holds for all $\eta>0$.
\end{proof}

The linear version of this result can be consulted in \cite[Section 15.5]{defant93}.

\subsection{The Representation Theorem for Maximal $\Sigma$-ideals of Multilinear Operators}

For proving this result, we need two preliminary results about maximal $\Sigma$-ideals and finitely generated $\Sigma$-tensor norms. A detailed exposition of this result for the case of linear operators can be found in \cite[Section 17]{defant93}.

\begin{proposition}\label{regular}
Let $\ideal$ be a maximal $\Sigma$-ideal. Then $T\in\Abxxy$ if and only if $K_Y T\in \Abxxydd$.
\end{proposition}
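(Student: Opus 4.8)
The plan is to prove both implications by relating the $\Sigma$-$\beta$-$\br$-operators $Q_L f_T I_{\eee}$ appearing in the maximality formula for $\Ab(T)$ to the corresponding ones for $K_Y T$, using the finite-dimensional character of the spaces $Y/L$. One implication is essentially trivial: if $T \in \Abxxy$, then by the ideal property I3 applied with $S = K_Y : Y \into Y^{**}$ (a bounded linear operator) and $R$ the natural inclusion-type $\Sigma$-$\beta$-$\beta$-operator $I_{\xxx}$, we get $K_Y T = K_Y f_T I_{\xxx} \in \Abxxydd$ with $\Ab(K_Y T) \leq \|K_Y\|\,\Ab(T) = \Ab(T)$. (More directly: $K_Y T$ is literally the composition $S f_T R$ with $R$ the canonical embedding $\xpx \into \sxx \subset \xtxb$, which is a $\Sigma$-$\beta$-$\beta$-operator of norm $1$.)

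For the nontrivial direction, suppose $K_Y T \in \Abxxydd$; I must show $T \in \Abxxy$ with $\Ab(T) \leq \Ab(K_Y T)$ (the reverse inequality already being available from the first part). First I would check that $T$ is at least a well-defined bounded multilinear operator with $f_T$ a bounded $\Sigma$-operator — this follows because $K_Y$ is an isometric embedding, so boundedness of $\widetilde{K_Y T}$ forces boundedness of $\tlin$. The main point is the norm computation. Fix $E_i \in \mathcal{F}(X_i)$ and $L \in \mathcal{CF}(Y)$. The key observation is that $Q_L : Y \into Y/L$ and the canonical map $Q_L^{**}$-related map interact well with $K_Y$: since $Y/L$ is finite-dimensional, $K_{Y/L}$ is an isometric isomorphism, and there is a bounded linear operator $\widetilde{Q}_L : Y^{**} \into Y/L$ (namely $K_{Y/L}^{-1} Q_L^{**}$, or equivalently the adjoint-related map) of norm $\leq 1$ satisfying $\widetilde{Q}_L K_Y = Q_L$. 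Consequently $Q_L f_T I_{\eee} = \widetilde{Q}_L K_Y f_T I_{\eee} = \widetilde{Q}_L f_{K_Y T} I_{\eee}$, and applying I3 to the maximal $\Sigma$-ideal $\ideal$ — with the $\Sigma$-$\beta$-$\br$-operator $I_{\eee}$, the operator $K_Y T \in \Abxxydd$, and the bounded linear map $\widetilde{Q}_L$ — yields
\[
A^{\br}\big(Q_L f_T I_{\eee} : \epe \into Y/L\big) \;=\; A^{\br}\big(\widetilde{Q}_L\, f_{K_Y T}\, I_{\eee}\big) \;\leq\; \|I_{\eee}\|\,\Ab(K_Y T)\,\|\widetilde{Q}_L\| \;\leq\; \Ab(K_Y T).
\]
Taking the supremum over all such $E_i$ and $L$ and invoking maximality of $\ideal$ (Definition \ref{maximality}) gives $\Ab(T) \leq \Ab(K_Y T) < \infty$, so in particular $T \in \Abxxy$.

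The step I expect to require the most care is the construction and verification of the auxiliary operator $\widetilde{Q}_L : Y^{**} \into Y/L$ with $\widetilde{Q}_L K_Y = Q_L$ and $\|\widetilde{Q}_L\| \leq 1$: one needs to confirm that $Q_L^{**}$ indeed lands in $K_{Y/L}(Y/L)$ (automatic since $Y/L$ is reflexive, being finite-dimensional) and that the resulting composition is $1$-bounded and makes the relevant diagram commute, so that the identity $Q_L f_T I_{\eee} = \widetilde{Q}_L f_{K_Y T} I_{\eee}$ holds as $\Sigma$-operators and I3 is genuinely applicable. A subtlety worth stating explicitly is that Definition \ref{maximality} already presupposes $T \in \Abxxy$ to even write $\Ab(T)$; so the argument must be read as: the maximality formula furnishes an expression depending only on the finite-dimensional data $A^{\br}(Q_L f_T I_{\eee})$, these are bounded uniformly by $\Ab(K_Y T)$, and hence the "candidate norm'' $\sup_{E_i, L} A^{\br}(Q_L f_T I_{\eee})$ is finite — which, together with the standard fact (implicit in how maximal $\Sigma$-ideals are built) that this supremum being finite certifies membership in $\Abxxy$, completes the proof.
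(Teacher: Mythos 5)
Your proof is correct, and for the nontrivial direction it takes a genuinely different route from the paper. The paper proves the converse implication via the Principle of Local Reflexivity: it sets $H=\mathrm{span}\,K_Yf_T(\Sigma_{\eee})$, uses PLR against the subspace $Q_L^*\bigl((Y/L)^*\bigr)\subset Y^*$ to produce an isomorphism $\psi:H\into G\subset Y$ with $\|\psi\|\leq 1+\eta$ satisfying $Q_Lf_TI_{\eee}=Q_L\psi K_YT$, applies the ideal property to get $A^{\br}(Q_Lf_TI_{\eee})\leq(1+\eta)\Ab(K_YT)$, and then lets $\eta\to 0$ before taking the supremum. You instead exploit the exact factorization $Q_L=\widetilde{Q}_LK_Y$ with $\widetilde{Q}_L=K_{Y/L}^{-1}Q_L^{**}$ of norm at most $1$, which is legitimate precisely because $Y/L$ is finite dimensional (hence reflexive, so $Q_L^{**}$ lands in $K_{Y/L}(Y/L)$ and the naturality identity $Q_L^{**}K_Y=K_{Y/L}Q_L$ closes the diagram); a single application of I3 to $\widetilde{Q}_L\,f_{K_YT}\,I_{\eee}$ then gives $A^{\br}(Q_Lf_TI_{\eee})\leq\Ab(K_YT)$ with no $\eta$ to dispose of. Your argument is the more elementary one (it is the standard device in the linear theory) and buys a cleaner estimate; the paper's PLR argument is heavier but arrives at the same bound in the limit. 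You are also right to flag that Definition \ref{maximality} as stated presupposes $T\in\Abxxy$, so that finiteness of $\sup A^{\br}(Q_Lf_TI_{\eee})$ certifies membership only via the maximal-hull description $\mathcal{A}=\mathcal{A}^{max}$ from Proposition \ref{maximalhull}; the paper's own proof relies on the same implicit reading without comment.
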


\begin{proof}
The ideal property asserts that $K_Y T\in\Abxxydd$ and $\Ab(K_Y T)\leq \Ab(T)$ whenever $T\in\Abxxy$.

Conversely, suppose $K_Y T\in\Abxxydd$. Let $E_i\in\mathcal{F}(X_i)$, $1 \leq i \leq n$ $L\in\mathcal{CF}(Y)$ and $\eta>0$. There exist a finite dimensional subspace $F$ of $Y^*$ isometric to $(Y/L)^*$ via $Q_L^*$. Consider the subspace $H$ of $Y^{**}$ defined as the span of the set $K_Y f_T(\Sigma_{\eee})$. By the Principle of Local Reflexivity there exist a finite dimensional subspace $G$ of $Y$ and an isomorphism $\psi:H\into G$ with $\|\psi\|\leq 1+\varepsilon$ such that $ \lev y^* , \psi(y^{**})  \rev =   \lev y^{**} , y^*  \rev$ for all $y^*\in F$ and $y^{**}\in H$. Then, algebraic manipulations lead to $\lev \varphi  \,,\, Q_L f_T I_{\eee}(p)  \rev = \lev \varphi  \,,\, Q_L\psi K_Y f_T(p)  \rev$ for all $\varphi \in (Y/L)^*$ and $p\in\Sigma_{\eee}$. In other words $Q_L f_T I_{\eee}=Q_L\psi K_Y T$. Finally, the ideal property implies
\[\Ab(Q_L f_T I_{\eee})=\Ab(Q_L\psi K_Y T)\leq\left(1+\eta \right)\,  A^\beta(K_YT)\]
which ensures $\Ab(Q_L f_T I_{\eee})\leq \Ab(K_YT)$. The proof is complete after taking suprema, in previous inequality, over all $E_i$, $1\leq i \leq n$ and $L$ as above.
%\begin{align*}
%\lev \varphi  \,,\, Q_L f_T I_{\eee}(p)  \rev &= \lev  Q_L^*\varphi \,,\, f_T(p)  \rev\\
%            &=\lev K_Y f_T(p)  \,,\, Q_L^*\varphi  \rev\\
%            &=\lev Q_L^*\varphi  \,,\, \psi K_Y f_T(p)  \rev\\
%            &=\lev \varphi  \,,\, Q_L\psi K_Y f_T(p)  \rev
%\end{align*}
\end{proof}

As usual, every bounded functional $\fhi:X_1\tens_\pi\dots\tens_\pi X_n\tens_\pi Y\into\kk$ defines a bounded multilinear operator $T_\fhi:\xpx\into Y^*$ given by $\lev T_\fhi\xxp , y\rev:= \fhi (\xxt\tens y)$ for all $x^i\in X_i$, $1\leq i\leq n$ and $y\in Y$. The canonical extension of $\fhi$ is denoted by $\varfhi$ and defined by
\begin{eqnarray*}
\overline{\fhi}:\xtx\tens Y^{**}                       &\into      &   \kk\\
x^1\tens\dots\tens x^n\tens y^{**}   &\mapsto &  \lev  y^{**} , T_\fhi(\xxt)  \rev.
\end{eqnarray*}
If we consider $\xtxty$ as a linear subspace of $\xtx\tens Y^{**}$, the functional $\overline{\fhi}$ is actually an extension of $\fhi$ since for every $x^i\in X_i$, $1\leq i\leq n$ and $y\in Y$ we have $\overline{\fhi}(\xxt\tens y)=\lev  T_\fhi(\xxt)  ,  y  \rev =\fhi(\xxt\tens y)$.

On the other hand, every bounded multilinear operator $T:\xpx\into Y$ defines a bounded functional $\fhi_T:X_1\tens_\pi\dots\tens_\pi X_n\tens_\pi Y^*\into\kk$ given by $\fhi_T(\xxt\tens y^*)=\lev y^*,  T\xxp \rev$ for all $x^i\in X_i$, $1\leq i\leq n$ and $y^*\in Y^*$. It is not difficult to see that $\fhi_{(T_\fhi)}=\varfhi$ for all bounded functional $\fhi:X_1\tens_\pi\dots\tens_\pi X_n\tens_\pi Y\into\kk$ and $T_{(\fhi_T)}=K_YT$ for all bounded multilinear operator $\Txxy$.

\begin{proposition}\label{aronbernerextension}
Let $\alpha$ be a finitely generated $\Sigma$-tensor norm on spaces on the class $\ban$ and let $\xxyb$ be an election in $\ban$. Then:
\begin{itemize}
\item [(i)]  $\left(\xtxty,\ab\right)$ is a closed subspace of $\left(\xtxtydd,\ab\right)$.
\item [(ii)] $\fhi\in\left(\xtxty,\ab\right)^*$ if and only if $\overline{\fhi}\in\left(\xtxtydd,\ab\right)^*$.
In this situation $\|\fhi\|=\|\overline{\fhi}\|$.
\end{itemize}
\end{proposition}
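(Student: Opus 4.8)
The plan is to deduce both items from the finitely generated character of $\alpha$ together with a Principle of Local Reflexivity argument, exactly as in the linear case treated in \cite[Section 17.5]{defant93}. First I would establish (i). The inclusion $\xtxty\hookrightarrow\xtxtydd$ is induced by $K_Y\colon Y\into Y^{**}$; it is injective because $K_Y$ is. To see it is isometric, fix $u\in\xtxty$ and let $G$ be a finite dimensional subspace of $Y$ containing the (finitely many) vectors appearing in some representation of $u$; choosing any $E_i\in\mathcal F(X_i)$ with $u\in\etetf$ when $F=G$ (resp. $F=K_Y(G)\subset Y^{**}$), the restricted norm $\abr$ does not change under the isometry $K_Y|_G\colon G\into K_Y(G)$, since the generating properties S1--S3 are invariant under this isometry. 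Taking the infimum over all such $E_i$ and $F$ gives $\ab(u;\xxx,Y)=\ab(u;\xxx,Y^{**})$ as finitely generated norms, so the inclusion is isometric; closedness then follows because $\xtxty$ is finite dimensional modulo nothing here — more precisely, any $w\in\xtxtydd$ in the closure lies in a fixed finite dimensional $\etetf$ with $F\subset Y^{**}$, and the issue reduces to the finite dimensional statement that the image of $G\tens(\cdots)$ is closed, which is automatic.

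Next I would prove (ii). One direction is trivial: if $\varfhi$ is $\ab$-bounded on $\xtxtydd$ then, since $\varfhi$ extends $\fhi$ and the inclusion in (i) is isometric, $\fhi$ is $\ab$-bounded with $\|\fhi\|\le\|\varfhi\|$. For the converse, assume $\fhi\in(\xtxty,\ab)^*$ with $\|\fhi\|\le1$; I must show $\varfhi$ is bounded on $\xtxtydd$ with the same norm. Because $\alpha$ is finitely generated on the $Y^{**}$ side, it suffices to bound $|\varfhi(w)|$ for $w$ in an arbitrary finite dimensional $\etetf$ with $E_i\subset X_i$ and $F\subset Y^{**}$, estimating against $\abr(w;\eee,F)$. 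Write $w=\sum_{i=1}^m x_i^1\tens\cdots\tens x_i^n\tens y_i^{**}$ with all $y_i^{**}$ in $F$. The key point is that $\varfhi(w)=\sum_i\langle y_i^{**},T_\fhi(x_i^1\tens\cdots\tens x_i^n)\rangle$ depends on the finitely many values $\langle y_i^{**},z^*\rangle$ where $z^*$ ranges over the (finite dimensional) span of the $T_\fhi(x_i^1\tens\cdots\tens x_i^n)$ in $Y^*$.

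Now apply the Principle of Local Reflexivity: for $\eta>0$ there is an operator $\psi\colon F\into Y$ with $\|\psi\|\le1+\eta$ and $\langle y^*,\psi(y^{**})\rangle=\langle y^{**},y^*\rangle$ for all $y^{**}\in F$ and all $y^*$ in that finite dimensional subspace of $Y^*$. Then $\varfhi(w)=\fhi\bigl((\mathrm{id}_{E_1}\tens\cdots\tens\mathrm{id}_{E_n}\tens\psi)(w)\bigr)$, because both sides evaluate the same pairings; here I must be mildly careful that $\psi(F)$ lands in a finite dimensional subspace $G$ of $Y$ so that the right-hand side makes sense inside $\ete\tens G$, which it does. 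Consequently $|\varfhi(w)|\le\|\fhi\|\,\abr\bigl((\mathrm{id}\tens\cdots\tens\mathrm{id}\tens\psi)(w);\eee,G\bigr)\le(1+\eta)\,\abr(w;\eee,F)$, using property S3 (the metric mapping property) for the operator $\mathrm{id}\tens\cdots\tens\mathrm{id}\tens\psi$, which is a tensor-product operator of the required form. Letting $\eta\to0$ and taking the infimum over the finite dimensional data gives $|\varfhi(w)|\le\ab(w;\xxx,Y^{**})$, hence $\|\varfhi\|\le\|\fhi\|$; combined with the trivial inequality this yields $\|\fhi\|=\|\varfhi\|$.

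The main obstacle I anticipate is the bookkeeping in the Local Reflexivity step: one must isolate exactly the correct finite dimensional subspace of $Y^*$ on which the $\psi$ produced by PLR must be the identity, namely the span of the images $T_\fhi(x_i^1\tens\cdots\tens x_i^n)$, and then verify that the identity $\varfhi(w)=\fhi((\mathrm{id}\tens\cdots\tens\mathrm{id}\tens\psi)(w))$ holds term by term — this is where the definitions of $T_\fhi$ and $\varfhi$ must be unwound carefully. The rest is a routine combination of the finitely generated definition (Definition \ref{finitelygenerated}) with property S3.
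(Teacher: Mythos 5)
Your part (ii) is essentially the paper's own argument: the same reduction, via the finitely generated property, to a finite dimensional $\etetf$ with $F\subset Y^{**}$; the same application of the Principle of Local Reflexivity to the pair $F\subset Y^{**}$ and $\mathrm{span}\{f_\fhi(x_i^1\tens\cdots\tens x_i^n)\}\subset Y^*$; and the same estimate through the uniform property S3 applied to $I\tens\psi$. That portion is correct.

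Part (i), however, has a genuine gap. You assert that $\ab(u;\xxx,Y)=\ab(u;\xxx,Y^{**})$ because the finitely generated infima ``do not change under the isometry $K_Y|_G$.'' But the infimum defining $\ab(u;\xxx,Y^{**})$ runs over \emph{all} finite dimensional subspaces $F$ of $Y^{**}$ with $u\in\etetf$, not only those of the form $K_Y(G)$ with $G\in\mathcal{F}(Y)$: an element $u$ of $\xtxty$ admits representations $\sum_i x_i^1\tens\cdots\tens x_i^n\tens y_i^{**}$ inside $\xtx\tens Y^{**}$ whose last components do not lie in $K_Y(Y)$, and the corresponding $F$ could a priori yield a strictly smaller value of $\abr$. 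Your comparison over the subfamily $\{K_Y(G)\}$ therefore only gives the easy inequality $\ab(u;\xxx,Y^{**})\leq\ab(u;\xxx,Y)$, which already follows from uniformity since $K_Y$ has norm one. The converse inequality is where the work lies, and it again requires the Principle of Local Reflexivity, exactly as the paper does: given $F\subset Y^{**}$ nearly attaining the infimum and containing the vectors $y_i\in Y$ of a fixed representation of $u$, one produces $\psi:F\into G\subset Y$ with $\|\psi\|\leq 1+\eta$ and $\psi(y_i)=y_i$, so that $(I\tens\psi)(u)=u$ and S3 gives $\ab(u;\xxx,Y)\leq\abr(u;\eee,G)\leq(1+\eta)\,\abr(u;\eee,F)\leq(1+\eta)^2\,\ab(u;\xxx,Y^{**})$. (Since your proof of the hard direction of (ii) does not rely on (i), you could instead recover the missing inequality a posteriori from (ii) together with Hahn--Banach, but as written (i) is not established; the accompanying remark about closedness is also too vague to carry any weight.)
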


\begin{proof}
(i): In this proof we have identified the spaces $Y$ and $K_Y(Y)$. By uniformity of $\alpha$ it is clear that $\ab(u; \xxx ,Y^{**})\leq\ab(u; \xxx, Y)$. For the converse inequality let $u\in\xtxty$, $\eta>0$ and fix a representation of $u$ of the form $\sum_{i=1}^m x_i^1\tens\dots\tens x_i^n\tens y_i$. There exist finite dimensional subspaces $E_i\subset X_i$ and $F\subset Y^{**}$ such that $u\in\etetf$ and
\[\abr(u;  \eee, F)\leq (1+\eta)\,  \ab(u ; \xxx,Y^{**}).\]
We may assume that $y_i\in F$, $1\leq i \leq n$. The Principle of Local Reflexivity ensures the existence of a finite dimensional subspace $G$ of $Y$ and an isomorphism $\psi:F\into G$ such that $\psi(y_i)=y_i$, $1\leq i \leq n$, and $\|\psi\|\leq(1+\eta)$. Consider the $\Sigma$-$\br$-$\br$-operator given by the identity $I:\seebr\into \seebr$. Again, uniformity of $\alpha$ implies that
\[\abr(I\tens \psi  (u);  \eee, G) \leq (1+\eta)\,\abr(u;  \eee, F).\]
Finally,
\begin{align*}
\ab( u;  \xxx, Y) &\leq \abr(u;  \eee, G)\\
            &=      \abr(I\tens \psi  (u);  \eee, G)\\
        &\leq      (1+\eta)\,  \abr(u;  \eee, F)\\
        &\leq      (1+\eta)^2\,  \ab(u ;  \xxx, Y^{**})\\
\end{align*}
holds for all $\eta>0$.

(ii): First, suppose $\overline{\fhi}$ is bounded. Let $u\in\left(\xtxty,\ab\right)$. Then $\fhi(u)=\overline{\fhi}(u)$ and $\ab\left(u; \xxx, Y\right)=\ab\left(u; \xxx Y^{**}\right)$. Therefore, $\fhi$ is bounded and $\|\fhi\|\leq\|\overline{\fhi}\|$.

Conversely, suppose $\fhi$ is bounded. Fix $u$ in $\left(\xtxtydd,\ab\right)$ and let $\eta>0$. Since $\alpha$ is finitely generated there exist finite dimensional subspaces
$E_i$ and $F$ of $X_i$, $1\leq i \leq n$, and $Y^{**}$ respectively such that $\etetf$ contains $u$ and
\[\abr\left(u;  \eee, F\right)\leq(1+\eta)\,  \ab\left(u;  \xxx, Y^{**}\right).\]
Let $\sum_{i=1}^m x_i^1\tens\dots\tens x_i^n\tens y_i^{**}$ be a fixed representation of $u$ in $\etetf$. Now, we may apply the Principle of Local Reflexivity to $F\subset Y^{**}$ and $span\{f_\fhi(x_i^1\tens\dots\tens x_i^n)\}\subset Y^*$ to find a finite dimensional subspace $G\subset Y$ and an isomorphism $\psi:F\into G$ with $\|\psi\|\leq 1+\eta$ and $\lev f_\fhi(x_i^1\tens\dots\tens x_i^n) \,,\,  \psi(y_i^{**}) \rev=\lev y_i^{**}\,,\,
f_\fhi(x_i^1\tens\dots\tens x_i^n)\rev$ for all $1\leq i \leq n$.

Then $\fhi(x_i^1\tens\dots\tens x_i^n\tens \psi(y_i^{**}))=\overline{\fhi}(x_i^1\tens\dots\tens x_i^n\tens y_i^{**})$, hence $\fhi\circ (I\tens\psi)(u)=\overline{\fhi}(u)$. Finally,
\begin{align*}
|\overline{\fhi}(u)|&=|\fhi\circ (I\tens\psi)(u)|\\
         &\leq\|\fhi\|\;\ab\left(I\tens\psi(u);  \xxx, Y\right)\\
         &\leq\|\fhi\|\;\abr\left(I\tens\psi(u);  \eee, G\right)\\
         &\leq\|\fhi\|\;(1+\eta)\;\abr\left(u;  \eee, F\right)\\
         &\leq\|\fhi\|\;(1+\eta)^2\;\ab\left(u;  \xxx, Y^{**}\right)
\end{align*}
completes the proof.
\end{proof}

If in Theorem \ref{tndsideal} we take $n=1$, then we recover the original relation between tensor norms and ideals of linear operators. For this reason we say that the $\Sigma$-ideal $\ideal$ and the $\Sigma$-tensor norm on duals $\nu$ are \emph{associated} if $\Abeef$ is isometrically isomorphic to $\left(\Lbee\tens F,\vb\right)$ for all elections $(n,\eee, F, \beta)$ in $\finn$.

\begin{theorem}\label{rt}{\bf (Representation Theorem for Maximal $\Sigma$-ideals)} Let $\nu$ be a $\Sigma$-tensor norm on duals and $\ideal$ be the maximal $\Sigma$-ideal associated to $\nu$. Then
\begin{align*}
\left(\xtxty,\ab\right)^*&=\Abxxyd\\
\left(\xtxtyd,\ab\right)^*\cap\Lxxy&=\Abxxy
\end{align*}
for every election $\xxyb$ in $\ban$, where $\alpha$ is the finitely generated $\stn$ on spaces on the class $\ban$ defined by $\nu$.
\end{theorem}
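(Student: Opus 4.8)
The strategy is to transport the finite-dimensional identification supplied by Theorems \ref{tnstotnd} and \ref{tndsideal} to arbitrary Banach spaces, using finite generation of $\alpha$ (Definition \ref{finitelygenerated}), maximality of $\ideal$ (Definition \ref{maximality}), the canonical-extension estimate of Proposition \ref{aronbernerextension}, and the regularity of Proposition \ref{regular}; throughout, the isometry is realized by $\fhi\mapsto T_\fhi$. \textbf{Finite-dimensional step.} Because $\alpha$ is defined by $\nu$ and $\ideal$ is associated to $\nu$, Theorem \ref{tnstotnd} and reflexivity in finite dimensions give $(E_1\tens\cdots\tens E_n\tens G^*,\ab)^*=(\Lbee\tens G,\vb)$ for every finite-dimensional $G$, and Theorem \ref{tndsideal} identifies the right-hand side with $\mathcal{A}^\beta(\eee;G)$; chasing the two maps shows the composite is $\fhi\mapsto T_\fhi$. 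The same holds verbatim with $\beta$ replaced by the restriction $\br$ of any reasonable crossnorm to $\ete$.

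\textbf{First identity.} Fix $\xxyb$ in $\ban$. If $T\in\Abxxyd$, set $\fhi(\xxt\tens y):=\lev T\xxp,y\rev$ on the algebraic tensor product $\xtxty$. For $E_i\in\mathcal{F}(X_i)$ and $F\in\mathcal{F}(Y)$, writing $F^\perp=\{y^*\in Y^*:y^*|_F=0\}$ so that $Y^*/F^\perp\cong F^*$, the restriction $\fhi|_{\etetf}$ corresponds under the finite-dimensional step (with $G=F^*$) to $Q_{F^\perp}f_T I_{\eee}$, the compression of $T$ obtained by restricting its values to $F$; since $I_{\eee}$ is a $\Sigma$-$\beta$-$\br$-operator with $\|\widetilde{I_{\eee}}\|=1$, property I3 of Definition \ref{ideal} bounds its $A^{\br}$-norm by $A^\beta(T)$. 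Finite generation of $\alpha$ then gives $\|\fhi\|=\sup_{E_i,F}\|\fhi|_{\etetf}\|\le A^\beta(T)$, whence $\fhi\in(\xtxty,\ab)^*$ and $T_\fhi=T$; this yields surjectivity of $\fhi\mapsto T_\fhi$, while injectivity is clear. Conversely, let $\fhi\in(\xtxty,\ab)^*$; then $T_\fhi:\xpx\into Y^*$ is bounded multilinear and, by Proposition \ref{aronbernerextension}, its canonical extension satisfies $\varfhi\in(\xtxtydd,\ab)^*$ with $\|\varfhi\|=\|\fhi\|$. For $E_i\in\mathcal{F}(X_i)$ and $L\in\mathcal{CF}(Y^*)$, identify $(Y^*/L)^*$ with the finite-dimensional subspace $L^\perp\subset Y^{**}$; one checks that $Q_L f_{T_\fhi}I_{\eee}:\epe\into Y^*/L$ corresponds, under the finite-dimensional step, to $\varfhi$ restricted to $E_1\tens\cdots\tens E_n\tens L^\perp$, so that $A^{\br}(Q_L f_{T_\fhi}I_{\eee})=\|\varfhi|_{E_1\tens\cdots\tens E_n\tens L^\perp}\|$. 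As $L$ runs over $\mathcal{CF}(Y^*)$ the spaces $L^\perp$ run over all of $\mathcal{F}(Y^{**})$ (finite-dimensional subspaces being weak-$*$ closed), so maximality of $\ideal$ and finite generation of $\alpha$ yield
\[A^\beta(T_\fhi)=\sup_{E_i,\,L}A^{\br}(Q_L f_{T_\fhi}I_{\eee})=\sup_{E_i,\,V\in\mathcal{F}(Y^{**})}\|\varfhi|_{E_1\tens\cdots\tens E_n\tens V}\|=\|\varfhi\|=\|\fhi\|.\]
In particular the supremum is finite, so (by maximality of $\ideal$) $T_\fhi\in\Abxxyd$ with $A^\beta(T_\fhi)=\|\fhi\|$. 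Hence $\fhi\mapsto T_\fhi$ is an isometric isomorphism $(\xtxty,\ab)^*=\Abxxyd$.

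\textbf{Second identity.} Apply the first identity with $Y$ replaced by $Y^*$ to obtain $(\xtxtyd,\ab)^*=\Abxxydd$ isometrically via $\fhi\mapsto T_\fhi$. A functional $\fhi$ on the left lies in $\Lxxy$ exactly when $T_\fhi=K_Y T$ for some $T\in\Lxxy$; by Proposition \ref{regular} this happens iff $T\in\Abxxy$, and then the ideal property gives $A^\beta(K_Y T)\le A^\beta(T)$ while the computation in the proof of Proposition \ref{regular} gives the reverse inequality, so $A^\beta(T)=A^\beta(K_Y T)=\|\fhi\|$. Thus $\fhi\mapsto T$ is the required isometric isomorphism $(\xtxtyd,\ab)^*\cap\Lxxy=\Abxxy$.

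\textbf{Main obstacle.} The crux is the second half of the first identity: verifying that $Q_L f_{T_\fhi}I_{\eee}$ is \emph{precisely} the restriction of the canonical extension $\varfhi$ to $E_1\tens\cdots\tens E_n\tens L^\perp$. This hinges on the isometric identification $(Y^*/L)^*=L^\perp\subset Y^{**}$, on the compatibility (from the finite-dimensional step) of the duality $(E_1\tens\cdots\tens E_n\tens G^*,\ab)^*=\mathcal{A}^\beta(\eee;G)$ with the assignment $\fhi\mapsto T_\fhi$, and on the fact that $\{L^\perp:L\in\mathcal{CF}(Y^*)\}$ exhausts $\mathcal{F}(Y^{**})$; once this identity is in hand it must be fed simultaneously into the supremum defining maximality of $\ideal$ and the supremum defining finite generation of $\alpha$, keeping all norms aligned. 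The remainder is routine tensor-product bookkeeping together with Propositions \ref{aronbernerextension} and \ref{regular}.
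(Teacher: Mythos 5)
Your proof is correct, and it draws on exactly the same toolbox as the paper --- the finite-dimensional duality of Theorems \ref{tnstotnd} and \ref{tndsideal}, finite generation of $\alpha$, maximality of $\ideal$ (used, as in the paper, in the maximal-hull sense: finiteness of the supremum already gives membership), Proposition \ref{aronbernerextension} and Proposition \ref{regular} --- but it reverses the logical order of the two identities. The paper proves the second identity $\left(\xtxtyd,\ab\right)^*\cap\Lxxy=\Abxxy$ directly; there the localization is clean because the finite-dimensional subspaces $F\subset Y^*$ arising from finite generation are exactly the duals $(Y/L)^*$ with $L\in\mathcal{CF}(Y)$, so maximality and finite generation mesh without ever leaving $Y$ and $Y^*$, and the first identity is then deduced by passing to $Y^*$ and restricting canonical extensions via Proposition \ref{aronbernerextension}. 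You instead prove the first identity directly, which forces the bidual in at an earlier stage: to apply maximality to $T_\fhi:\xpx\into Y^*$ you must realize the quotients $Y^*/L$ as duals of the annihilators $L^\perp\in\mathcal{F}(Y^{**})$, work with $\varfhi$ on $\xtxtydd$, and use that $L\mapsto L^\perp$ exhausts $\mathcal{F}(Y^{**})$; the second identity then follows by replacing $Y$ with $Y^*$ and invoking Proposition \ref{regular}. Both routes are sound and of comparable length; the paper's order confines the local-reflexivity machinery to the derivation step, while yours concentrates it in the converse half of the first identity. In a final write-up you should spell out the identification $A^{\br}(Q_L f_{T_\fhi} I_{\eee})=\|\varfhi|_{E_1\tens\cdots\tens E_n\tens L^\perp}\|$ (which you rightly flag as the crux), and note that the norm equality $\Ab(T)=\Ab(K_YT)$ you quote from Proposition \ref{regular} is contained in its proof rather than its statement.
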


\begin{proof}
First, we will prove the second equality. Given finite dimensional subspaces $E_i$ of $X_i$, $1\leq i \leq n$ and a finite codimensional subspace $L$ of $Y$ we have, by hypothesis, that
\begin{equation}\label{rt1}
\left(\ete\tens (Y/L),^*\abr\right)^*=\mathcal{A}^{\br}\left(\eee;Y/L\right)
\end{equation}
is a linear isometric isomorphism.

Let $T\in\Abxxy$ and $\eta>0$ fixed. Let $\fhi_T$ be the associated functional of $T$. For $u\in\left(\xtxtyd,\ab\right)$ there exist $E_i\in \mathcal{F}(X_i)$, $1\leq i \leq n$ and $F\in \mathcal{F}(Y^*)$ such that $u\in\left(\etetf,\abr\right)$ and $\abr(u;\eee,F)\leq(1+\eta)\,  \ab(u;\xxx, Y)$. The space $F$ defines a finite codimensional subspace $L$ of $Y$ such that $(Y/L)^*=F$ holds linearly and isometrically via $Q_L^*$. Then, \eqref{rt1} ensures 
\begin{align*}
|\fhi_T(u)|           &=       |\fhi_T\circ (I_{\eee}\tens Q_L^*)(u)|\\
                      &\leq      \|\fhi_T\circ (I_{\eee}\tens Q_L^*):\left(\ete\tens (Y/L)^*,\abr\right)\into \kk\|  \abr(u)\\
                            &=     A^{\br}(Q_L  f_T I_{\eee})  (1+\eta)  \ab(u;\xxx,Y)\\
                       &\leq      \Ab(T)  (1+\eta)  \ab(u;\xxx, Y).
\end{align*}
Hence, $\fhi_T$ is bounded and $\|\fhi_T\|\leq A(T)$.

For the converse inequality let $\fhi\in\left(\xtxtyd, \ab\right)^*$ such that its associated multilinear operator has range contained in $Y$. First, notice that
\[\sup\left\{\; A(Q_L f_{T_\fhi} I_{\eee}) \;|\; E_i\in\mathcal{F}(X_i)\;L\in\mathcal{CF}(Y) \;\right\}<\infty.\]
This is easy to see since
\begin{equation}\label{rt2}
A(Q_L f_{T_\fhi} I_{\eee})=\|\fhi\circ (I_{\eee}\tens Q_L^*)\|\leq\|\fhi\|
\end{equation}
holds for all $E_i\in\mathcal{F}(X_i)$, $1\leq i \leq n$ and $L\in\mathcal{CF}(Y)$. This means that $T_\fhi\in\Abxxy$. Actually, if we take suprema in \eqref{rt2} over all $E_i$ and $L$ we obtain, by maximality, that $A(T_\fhi)\leq\|\fhi\|$.

For the first equality let $T\in\Abxxyd$. We will prove that
\begin{eqnarray*}
\zeta_T:(\xtxty,\ab) &\into    &\kk\\
             \xxt\tens y &\mapsto & \lev  f_T(\xxt), y \rev
\end{eqnarray*}
is bounded. By Proposition \ref{aronbernerextension} this occurs exactly when its canonical extension $\overline{\zeta_T}:(\xtxtydd,\ab)\into \kk$ is bounded. We just have proved that the functional $\fhi_T:(\xtxtydd,\ab)\into \kk$ is bounded and $\|\fhi_T\|=\Ab(T)$. But
\[\fhi_T(\xxt\tens y^{**}) =\lev y^{**} \,,\, f_T(\xxt)\rev =\overline{\zeta_T}(\xxt\tens y^{**})\]
asserts that $\fhi_T=\overline{\zeta_T}$. Hence $\zeta_T$ is bounded and $\|\zeta_T\|=\Ab(T)$.

Conversely, let $\fhi\in\left(\xtxty,\ab\right)^*$. Consider the associated multilinear operators $T_\fhi:\xpx\into Y^*$ and $T_{\overline{\fhi}}:\xpx\into Y^{***}$ of $\fhi$ and $\overline{\fhi}$. The definition of the canonical extension implies that $\lev T_{\overline{\fhi}}(\xxt)  \,,\, y^{**} \rev = \lev K_{Y^*}T_{\fhi}(\xxt)\,,\,y^{**}\rev$ for all $x_i^*\in X_i$, $1 \leq i \leq n$ and $y^{**}\in Y^{**}$. This means that $T_{\overline{\fhi}}$ has range contained in $Y^*$ and $T_{\overline{\fhi}}=K_{Y^*}T_\fhi$. Now, Proposition~\ref{aronbernerextension} implies
\[\overline{\fhi}\in\left(\xtxtydd,\ab\right)^*\cap\mathcal{L}\left(\xxx;Y^*\right) =\Abxxyd.\]
Finally, $K_{Y^*} T_\fhi\in\Abxxyddd$ asserts that $T_\fhi\in\Abxxyd$ and
\[\Ab(T_\fhi)=\Ab(K_{Y^*}T_{\fhi})=\|\overline{\fhi}\|=\|\fhi\|.\]
%\begin{align*}
%\lev T_{\overline{\fhi}}(\xxt)  \,,\, y^{**} \rev&= \overline{\fhi}\left(\xxt\tens y^{**}\right)\\
%                    &= \lev y^{**} \,,\, T_{\fhi}(\xxt)\rev\\
%                    &= \lev K_{Y^*}T_{\fhi}(\xxt)\,,\,y^{**}\rev.
%\end{align*}
\end{proof}

In the theory of ideals of linear operators (and even of multilinear operators) it is common to have an isometry of the form $(X\tens_\alpha Y)^*=\mathcal{A}(X;Y^*)$ for all Banach spaces $X$ and $Y$. From this, it is easy to prove that the ideal is maximal if $\alpha$ is finitely generated, a condition which is easier to check than maximality. We finish this section by presenting a criterion for maximality of $\Sigma$-ideals. First, we prove a preliminary result.

\begin{proposition}\label{maximalhull}
Let $\ideal$ be a $\Sigma$-ideal on the class $\finn$. Define for every multilinear operator $\Txxy$
\[A^{max,\beta}(T):=\sup  A^{\br}(Q_L f_T I_{\eee}:\epe\into Y/L)\leq\infty,\]
where the suprema is taken over all $E_i\in\mathcal{F}(X_i)$, $1\leq i\leq n$, and $L\in\mathcal{CF}(Y)$.
Also define
\[\mathcal{A}^{max,\beta}(\xxx,Y):=\{\; T:\xpx\into Y \;|\; A^{max,\beta}(T)<\infty  \;\}.\]
Then the pair $[\mathcal{A}^{max},A^{max}]$ is a maximal $\Sigma$-ideal on the class $\ban$.
\end{proposition}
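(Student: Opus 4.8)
The plan is to verify the three axioms I1, I2, I3 of Definition \ref{ideal} for the pair $[\mathcal{A}^{max}, A^{max}]$, and then show maximality in the sense of Definition \ref{maximality} essentially by an idempotency argument: the ``max'' construction applied twice gives the same thing. Throughout, I would rely on two standing facts: (a) for finite dimensional $E_i \subset X_i$ and $L \in \mathcal{CF}(Y)$, the operator $Q_L f_T I_{\eee}$ is a genuine multilinear operator between finite dimensional spaces, so $A^{\br}$ of it makes sense since $\ideal$ is defined on $\finn$; (b) $I_{\eee}$ is a $\Sigma$-$\beta$-$\br$-operator and $Q_L$ is a norm-one linear map, facts already observed in the text before Definition \ref{maximality}.

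\textbf{Verifying the axioms.} For I1, given $\fhi \in \Lbxx$ and $y \in Y$, one restricts $\fhi \cdot y$ to finite dimensional subspaces: $Q_L (\fhi\cdot y) I_{\eee} = (\fhi|_{\epe}) \cdot Q_L(y) = \fhi_{\br}\cdot \bar y$ where $\|\fhi_{\br}\|_{\br} \le \|\fhi\|_\beta$ (restriction of a reasonable crossnorm only decreases the dual norm) and $\|\bar y\| = \|Q_L y\| \le \|y\|$. Then I1 for $\ideal$ on $\finn$ gives $A^{\br}(Q_L(\fhi\cdot y)I_{\eee}) \le \|\fhi\|_\beta \|y\|$; taking the supremum gives I1 for $[\mathcal{A}^{max},A^{max}]$, and in particular $A^{max,\beta}(\fhi\cdot y) < \infty$ so $\fhi\cdot y \in \mathcal{A}^{max,\beta}(\xxx,Y)$. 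For I2, I need $\Lb(f_T) \le A^{max,\beta}(T)$; here I use that $\Lb(f_T) = \sup \Lb[\br](f_{Q_L f_T I_{\eee}})$ over finite dimensional reductions (this is the statement that the Lipschitz constant is computed locally, which follows from \eqref{lipschitzequivalent}/the definition of the Lipschitz norm since any two points of $\sxx$ live in a common $\Sigma_{\eee}$), combined with property I2 applied to $\ideal$ on each finite dimensional piece. That $\mathcal{A}^{max,\beta}$ is a linear subspace follows because $A^{max,\beta}$ is built as a supremum of seminorms (each $T \mapsto A^{\br}(Q_L f_T I_{\eee})$ is a seminorm on the space of multilinear operators, being a norm composed with a linear map $T \mapsto Q_L f_T I_{\eee}$), hence $A^{max,\beta}$ is itself a seminorm (indeed a norm, since it dominates $\Lb$ which dominates $\|\cdot\|$), and finite-normed elements form a subspace. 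Completeness of $(\mathcal{A}^{max,\beta}, A^{max,\beta})$: a Cauchy sequence is Cauchy in $\Lxxy$ by I2, so it has a limit $T$ there; for each fixed $E_i, L$ the reduction $Q_L(\cdot)I_{\eee}$ is continuous into the finite dimensional (hence closed, hence complete) component $\mathcal{A}^{\br}(\eee;Y/L)$, so $A^{\br}(Q_L f_T I_{\eee}) \le \liminf A^{\br}(Q_L f_{T_k} I_{\eee}) \le \sup_k A^{max,\beta}(T_k) < \infty$; taking suprema and using lower semicontinuity of a supremum of continuous functions gives $A^{max,\beta}(T) \le \liminf A^{max,\beta}(T_k)$ and $T_k \to T$ in $A^{max,\beta}$.

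\textbf{Verifying I3.} This is the step I expect to be the main obstacle, because it involves composing reductions across two different arrangements. Suppose $\fr:\szzm \to \sxx$ is a $\Sigma$-$\beta$-$\theta$-operator, $T \in \mathcal{A}^{max,\beta}(\xxx,Y)$, and $S:Y\to W$ is linear bounded; I must estimate $A^{max,\theta}(Sf_T R)$. Fix finite dimensional $M_j \subset Z_j$ and $L' \in \mathcal{CF}(W)$; I need to bound $A^{\theta|}(Q_{L'} f_{Sf_TR} I_{\mmm})$. The key move is to push the finite dimensional data from the $Z$-side through $R$ to the $X$-side: the image $\rlin(\Sigma_{\mmm}) \subset \sxx$ is a bounded subset; I approximate it (up to $\varepsilon$, using that the projective norm is finitely generated and density of finitely supported tensors, or more directly the structure of $\rlin$ restricted to the finite dimensional $M_1\tens\cdots\tens M_m$) to find finite dimensional $E_i \subset X_i$ with $\rlin(M_1\tens\cdots\tens M_m)$ ``essentially'' inside $\ete$; similarly $S^{-1}(L)$-type considerations produce a finite codimensional $L \subset Y$ with $S$ factoring as $Y \to Y/L \to W/L'$. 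Then one gets a factorization of $Q_{L'} f_{Sf_TR} I_{\mmm}$ through $\bar S\, (Q_L f_T I_{\eee})\, f_{\bar R}$ where $\bar R$ is the induced $\Sigma$-$\br$-$\theta|$-operator with $\|\widetilde{\bar R}\| \le \|\rlin\|$ and $\|\bar S\|\le\|S\|$, apply I3 for $\ideal$ on $\finn$ to get $A^{\theta|}(\cdots) \le \|\rlin\|\, A^{\br}(Q_L f_T I_{\eee})\, \|S\| \le \|\rlin\|\, A^{max,\beta}(T)\, \|S\|$, absorb the $\varepsilon$, and take the supremum over $M_j, L'$. The delicate point is making the ``essentially inside $\ete$'' precise while keeping the norm of the induced $\Sigma$-operator under control; I would handle it by first replacing $\rlin$ by its composition with a finite rank projection-type approximation and using continuity of $A^{\br}(Q_L(\cdot)I_{\eee})$ and of the ideal norm to pass to the limit, or, cleanly, by noting $\rlin$ is already bounded on the finite dimensional $M_1\tens\cdots\tens M_m$ so its image lies in a finite dimensional subspace of $\xtx$ which is contained in some $\ete$ with $E_i \subset X_i$ finite dimensional (choosing the $E_i$ as the coordinate projections of this finite dimensional subspace), giving an exact, not approximate, reduction.

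\textbf{Maximality.} Finally, to see $[\mathcal{A}^{max},A^{max}]$ is maximal, I must check that applying the max-construction to it returns it, i.e. $A^{max,\beta}(T) = \sup A^{max,\br}(Q_L f_T I_{\eee})$ over $E_i \in \mathcal{F}(X_i), L \in \mathcal{CF}(Y)$. The inequality ``$\ge$'' is immediate from I3 (or directly: each reduction has $A^{max}$-norm at most $A^{max,\beta}(T)$). For ``$\le$'': given finite dimensional $E_i, L$, the quantity $A^{\br}(Q_L f_T I_{\eee})$ is, by definition of $A^{max,\br}$ applied to the finite dimensional operator $Q_L f_T I_{\eee}$ over the finite dimensional spaces $E_i$ and $Y/L$, already equal to its own max — since over $\finn$ all subspaces are the whole space, $A^{max,\br}$ restricted to finite dimensional arrangements coincides with $A^{\br}$ — so $A^{\br}(Q_L f_T I_{\eee}) \le A^{max,\br}(Q_L f_T I_{\eee})$ would need the reverse; the clean statement is that for $E_i, Y/L$ finite dimensional there is nothing to reduce, hence $A^{max,\br}(Q_L f_T I_{\eee}) = A^{\br}(Q_L f_T I_{\eee})$, and taking the supremum over all $E_i, L$ on both sides yields exactly $A^{max,\beta}(T) = \sup A^{max,\br}(Q_L f_T I_{\eee})$. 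This is the definition of maximality for $[\mathcal{A}^{max},A^{max}]$, completing the proof.
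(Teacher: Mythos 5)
Your proposal is correct and, on the two points the paper actually writes out, follows essentially the same route: for I3 you use exactly the paper's key observation that $\rlin(M_1\tens\cdots\tens M_m)$ is finite dimensional and hence sits inside some $\ete$ with $E_i\in\mathcal{F}(X_i)$ (so $\fr I_{\mmm}(\Sigma_{\mmm})\subset\Sigma_{\eee}$), together with $L=\ker(Q_G S)$ on the codomain side — your preliminary ``up to $\varepsilon$'' hedging is unnecessary, the exact reduction is the right argument; for completeness you use Cauchy sequences plus lower semicontinuity of the supremum where the paper uses the absolutely-summable-series criterion, a cosmetic difference. The remaining verifications (I1, I2 via $\|y\|=\sup_L\|Q_Ly\|$, and maximality via $A^{max,\br}=A^{\br}$ on $\finn$) are correct and are precisely the routine parts the paper declares it omits.
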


\begin{proof}
For shorten the article we only prove the more delicate issues of this proposition. That is we only prove the ideal property and that $A^{max,\beta}$ give place to a Banach space.

Let $(m,\zzzm, W, \theta)$ be an election in $\ban$. Consider the composition
\[\begin{array}{c}
\xymatrix{
\szzm\ar[r]^-\fr    &\sxx\ar[r]^-{f_T}  &Y\ar[r]^S                   & W\\
}
\end{array}\]
where $S$ is a bounded linear operator, $T$ is in $\mathcal{A}^{max,\beta}(\xxx;Y)$ and $\fr$ is a $\Sigma$-$\beta$-$\theta$-operator associated with $R$. Let $M_i$ be a finite dimensional subspace of $Z_i$, $1\leq i \leq m$ and $G$ be a finite codimensional subspace of $W$. Since $\fr(\szzm)$ is contained in $\sxx$ and $M_1\tens\cdots\tens M_m$ is a finite dimensional space then there exist finite dimensional subspaces $E_i$ of $X_i$, $1\leq i \leq n$ such that $\fr I_{M_1,\dots, M_m}(\Sigma_{M_1,\dots, M_m})\subset\Sigma_{\eee}$. Set $L=Ker(Q_GS)\in\mathcal{CF}(Y)$. Consider the commutative diagram
\[\begin{array}{c}
\xymatrix{
  \szzm\ar[drr]_{\fr I_{M_1,\dots, M_m}}\ar[rr]^{\fr}   &&\sxx\ar[rr]^{f_T}   && Y\ar[d]_{Q_L}\ar[r]^S   &W\ar[d]^{Q_G}\\
   \Sigma_{M_1,\dots, M_m}\ar[u]^{I_{M_1,\dots, M_m}}    &&\Sigma_{\eee}\ar[u]_{I_{\eee}}  &&Y/L\ar[r]_B  &W/G\\
}
\end{array}.\]
Then
\begin{align*}
A^{\theta|}\left(Q_G(Sf_T\fr)I_{\mmm}\right)   &=    A^{\theta|}(B(Q_L f_T I_{\eee})(\fr I_{\mmm}))\\
                &\leq    \|S\|  A^{\br}(Q_L f_T I_{\eee})  \|\rlin\|\\
                &\leq  \|S\|  A^{max,\beta}(T)  \|\rlin\|.
\end{align*}
After taking suprema over all $M_i$, $1\leq i \leq m$ and $G$ as above we have
\[A^{max}(Sf_T R)\leq\|S\|  A^{max,\beta}(T)  \|\rlin\|.\]

To see that $A^{max,\beta}$ is a complete norm let $(T^k)_{k=1}^\infty$ be a sequence in $\mathcal{A}^{max,\beta}(\xxx;Y)$ such that $\sum_{k=1}^\infty A^{max,\beta}(T^k)<\infty$. Define $T=\sum_{k=1}^\infty T^k:\xpx\into Y$. Notice that
\[ \sum\limits_{k=1}^\infty \|f_{T^k}(p)\| \leq   \sum\limits_{k=1}^\infty \Lb(f_{T^k}) \leq   \sum\limits_{k=1}^\infty A^{max,\beta}(T^k).\]
Hence, $\sum_{k=1}^\infty f_{T^k}(p)$ converges in $Y$ for every $p\in\sxx$ since $Y$ is a Banach space. Moreover, $\sum_{k=1}^\infty \Lb(f_{T^k}) \leq   \sum_{k=1}^\infty A^{max,\beta}(T^k)$ also implies that $f_T$ is a Lipschitz function on $\sxx$. That is, $T$ is a bounded multilinear operator. Finally, by definition of $f_T$, the triangle inequality and definition of $A^{max,\beta}$ we have that
\[A^\br(Q_Lf_T I_{\eee})  \leq    \sum\limits_{k=1}^\infty A^\br( Q_L f_{T^k} I_{\eee}) \leq     \sum\limits_{k=1}^\infty A^{max,\beta}(T^k)\]
holds for all $E_i$,$1\leq i \leq n$ and $L$.
\end{proof}

Given a $\Sigma$-ideal $\ideal$ on the class $\ban$ then $[\mathcal{A}^{max},A^{max}]$ is called the \emph{maximal hull} of $\ideal$. By definition, $\ideal$ and its maximal hull coincide on the class $\finn$ and $A^{max,\beta}(T)\leq \Ab(T)$ for all multilinear operator $T$. In this terms, a $\Sigma$-ideal is maximal if it coincides with its maximal hull.

\begin{proposition}\label{criterionmaximal}
Let $\ideal$ be a $\Sigma$-ideal. Suppose there exist a finitely generated $\Sigma$-tensor norm on spaces $\alpha$ such that for any election of Banach spaces $\xxyb$ we have that
\begin{equation}\label{criterion}
\left(\xtxty,\ab\right)^*=\Abxxyd
\end{equation}
holds linearly and isometrically. Then, $\ideal$ is maximal.
\end{proposition}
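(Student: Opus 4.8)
The plan is to show that $\ideal$ coincides with its maximal hull $[\mathcal{A}^{max},A^{max}]$, which by the remark preceding the statement is exactly what maximality means. We already know the one-sided inequality $A^{max,\beta}(T)\leq \Ab(T)$ for every multilinear operator $T$ and every election; moreover $\ideal$ and $[\mathcal{A}^{max},A^{max}]$ agree on the class $\finn$. So it suffices to prove that whenever $T\in\mathcal{A}^{max,\beta}(\xxx;Y)$ one has $T\in\Abxxy$ together with $\Ab(T)\leq A^{max,\beta}(T)$.

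The key idea is to route the argument through the $\Sigma$-tensor norm on spaces $\alpha$ and the dual relation \eqref{criterion}, together with the Representation Theorem \ref{rt} applied to the maximal hull. First I would observe that, since $\alpha$ is finitely generated, $\nu:=$ (the $\Sigma$-tensor norm on duals that $\alpha$ restricts to, in finite dimensions, via Theorem \ref{tnstotnd}) is well-defined, and that by \eqref{criterion} the $\Sigma$-ideal $\ideal$ is associated to $\nu$ in the sense defined just before Theorem \ref{rt}. Next, let $[\mathcal{A}^{max},A^{max}]$ be the maximal hull of $\ideal$; because $\ideal$ and its maximal hull coincide on $\finn$, the maximal hull is also associated to $\nu$. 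Hence Theorem \ref{rt} applies to $[\mathcal{A}^{max},A^{max}]$ and yields, for every election $\xxyb$ in $\ban$,
\[
\left(\xtxty,\ab\right)^*=\mathcal{A}^{max,\beta}(\xxx;Y^*)
\]
linearly and isometrically, with $\alpha$ the very same finitely generated $\Sigma$-tensor norm on spaces. Comparing this with the hypothesis \eqref{criterion}, which says $\left(\xtxty,\ab\right)^*=\Abxxyd$ isometrically, we conclude $\Abxxyd=\mathcal{A}^{max,\beta}(\xxx;Y^*)$ isometrically, for all $\xxx,Y,\beta$; in particular replacing $Y$ by an arbitrary Banach space and using this for $Y$ itself, the components on duals coincide.

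It remains to upgrade the equality on dual ranges to an equality on arbitrary ranges. Here I would invoke Proposition \ref{regular}: both $\ideal$ and $[\mathcal{A}^{max},A^{max}]$ are... well, $[\mathcal{A}^{max},A^{max}]$ is maximal, so $T\in\mathcal{A}^{max,\beta}(\xxx;Y)$ iff $K_YT\in\mathcal{A}^{max,\beta}(\xxx;Y^{**})$, with equal norms. Given $T\in\mathcal{A}^{max,\beta}(\xxx;Y)$, apply this to get $K_YT\in\mathcal{A}^{max,\beta}(\xxx;Y^{**})=\Abxxydd$ (the last equality by the previous paragraph, applied with $Y^*$ in place of $Y$), with $\Ab(K_YT)=A^{max,\beta}(T)$. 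Then the ideal property I3 for $\ideal$, composed with a norm-one projection realizing $Y$ inside $Y^{**}$ on the relevant finite-dimensional pieces — or more cleanly, the finitely generated structure of $\alpha$ together with the Principle of Local Reflexivity exactly as in the proof of Proposition \ref{regular} — shows $T\in\Abxxy$ with $\Ab(T)\leq\Ab(K_YT)=A^{max,\beta}(T)$. Combined with the trivial $A^{max,\beta}(T)\leq\Ab(T)$, this gives $\Ab(T)=A^{max,\beta}(T)$ and hence maximality.

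The main obstacle I anticipate is the last step: passing from the isometric identification of the dual-valued components to the statement about $Y$-valued components. The clean way is to check that $\ideal$ itself is \emph{regular} in the sense that $T\in\Abxxy\iff K_YT\in\Abxxydd$ isometrically — which would follow if $\ideal$ satisfies the conclusion of Proposition \ref{regular}, but that proposition is stated only for maximal $\Sigma$-ideals, so one cannot cite it directly for $\ideal$. Instead I would argue regularity of $\ideal$ directly from \eqref{criterion}: the functional calculus identifying $\Abxxy$ inside $(\xtxtyd,\ab)^*$ (as in the second equality of Theorem \ref{rt}, whose proof only uses \eqref{criterion} and Proposition \ref{aronbernerextension}, not maximality) shows $T\in\Abxxy$ iff the associated functional $\zeta_T$ is $\ab$-bounded iff $K_YT$'s associated functional is $\ab$-bounded, i.e. iff $K_YT\in\Abxxydd$, with equal norms. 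Granting that, the chain $\Ab(T)=\Ab(K_YT)=A^{max,\beta}(K_YT)=A^{max,\beta}(T)$ closes the proof.
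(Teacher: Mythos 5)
Your overall route is the same as the paper's: restrict $\ideal$ to $\finn$, form the maximal hull $[\mathcal{A}^{max},A^{max}]$ via Proposition~\ref{maximalhull}, observe that it is associated to the same $\Sigma$-tensor norm on duals because the two ideals agree on $\finn$, apply Theorem~\ref{rt} to the hull and compare with \eqref{criterion} to get $\Abxxyd=\mathcal{A}^{max,\beta}(\xxx;Y^*)$ isometrically, and then treat arbitrary ranges through $K_Y$. Up to that point your argument matches the paper's step for step, and your use of Proposition~\ref{regular} for the hull is legitimate since the hull is maximal.

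The one place you diverge is the final step, and there your proposed repair does not work as stated. You want to prove that $\ideal$ itself is regular ($T\in\Abxxy$ iff $K_YT\in\Abxxydd$, isometrically) by invoking the second equality of Theorem~\ref{rt} and asserting that its proof ``only uses \eqref{criterion} and Proposition~\ref{aronbernerextension}, not maximality.'' That is not so: in the direction that recovers $T_\fhi\in\Abxxy$ from boundedness of $\fhi$, the paper's proof of Theorem~\ref{rt} takes the supremum of $A^{\br}(Q_L f_{T_\fhi} I_{\eee})$ over $E_i$ and $L$ and concludes $\Ab(T_\fhi)\le\|\fhi\|$ explicitly ``by maximality''; without maximality that supremum only controls $A^{max,\beta}(T_\fhi)$, which is precisely the quantity you are trying to compare with $\Ab(T)$. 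So the patch is circular. The same objection applies to your alternative suggestion of running the local reflexivity argument of Proposition~\ref{regular}: that argument bounds $\Ab(Q_L f_T I_{\eee})$ and hence only $A^{max,\beta}(T)$, not $\Ab(T)$, unless maximality is already known. For comparison, the paper itself disposes of this step by writing $\Ab(T)=\Ab(K_Y^{-1}K_YT)\le\Ab(K_YT)$, which amounts to applying the ideal property to $K_Y^{-1}$, an operator defined only on $K_Y(Y)\subset Y^{**}$; so your instinct that something substantive is needed to pass from $K_YT\in\Abxxydd$ back to $T\in\Abxxy$ is sound, but the justification you offer does not supply it.
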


\begin{proof}
Theorem~\ref{tndsideal} ensures that $\ideal$ is a $\Sigma$-ideal on the class $\finn$. Proposition~\ref{maximalhull} tells us that its maximal hull is a $\Sigma$-ideal on $\ban$. The RT combined with (\ref{criterion}) asserts that
\begin{equation}\label{criterion1}
\Abxxyd=\mathcal{A}^{max,\beta}\left(\xxx;Y^*\right)
\end{equation}
holds linearly and isometrically for all elections $\xxyb$ in $\ban$. Another application of the RT combined with (\ref{criterion1}) leads us to
\[\mathcal{A}^{max,\beta}\left(\xxx;Y\right)=\mathcal{A}^\beta\left(\xxx;Y^{**}\right)\cap \Lxxy.\]
Finally, for $T$ in $\mathcal{A}^{max,\beta}\left(\xxx;Y\right)$ we have
\[\Ab(T)=\Ab(K_Y^{-1}K_YT)\leq \Ab(K_YT)= A^{max,\beta}(T).\]
The proof is complete since the converse inequality $A^{max,\beta}(T)\leq \Ab(T)$ is always true.
\end{proof}

\begin{example}
The following duality relation holds
\begin{align*}
(\xtxty,\pi^\beta)^*&=\mathcal{L}^\beta_{Lip}(\xxx;Y^*)\\
(\xtxty,\gamma_2^\beta)^*&=\Gamma^\beta(\xxx;Y^*)\\
(\xtxty,\alpha_{p^*,q^*}^\beta)^*&=\mathcal{D}_{p,q}^\beta(\xxx;Y^*)\\
(\xtxty,d_p^\beta)^*&=\Pi_p^{Lip,\beta}(\xxx;Y^*)\\
\end{align*}
The first equality follows easily adapting the linear case; the second is an adaptation of \cite[Theorem 4.4]{fernandez-unzueta18b}; the third is \cite[Theorem 4.7]{fernandez-unzueta18a}; and the fourth is \cite[Theorem 2.26]{angulo10}.
\end{example}

According to Proposition \ref{regular}, in the presence of maximality we always may assume that multilinear operators under study have range contained in a dual space. Combining the Representation and Duality Theorems we have that if a maximal $\Sigma$-ideal $\ideal$ is associated with the cofinitely generated $\Sigma$-tensor norm on duals $\nu$ then

\begin{equation}\label{conclusion}
\begin{array}{c}
\xymatrix{
(\xtxty,\ab)^*  \ar[r] &    \Abxxyd           \\
(\Lbxx\tens Y^*,\vb) \ar[r]\ar@{^(->}[u]       &  (\mathcal{F}^\beta(\xxx;Y^*),\Ab)\ar@{^(->}[u] \\
}
\end{array}
\end{equation}
commutes, where the horizontal arrows are isomorphic isometries, the vertical ones are linear isometries and $\alpha$ is the finitely generated $\Sigma$-tensor norm on spaces defined by $\nu$ (see comment after Definition \ref{ideal}).

\subsubsection{Basic Results}

As can be seen, the most technical difficulty of the notion of $\Sigma$-ideals of multilinear operators is the ideal property since this involves all reasonable crossnorms $\beta$. Next, we explore some consequences of this consideration.

\begin{proposition}
Let $\alpha$ be a $\Sigma$-tensor norms on spaces and $\ideal$ be a $\Sigma$-ideal. Let $n$ be a positive integer, $\xxx$ and $Y$ be Banach spaces. Let $\beta$ and $\theta$ two reasonable crossnorms on $\xtx$ such that there exists $\lambda$ in $\kk$ such that $\beta\leq \lambda\theta$. Then
\begin{itemize}
\item [(i)] $\alpha^\beta (u)\leq \lambda \alpha^\theta(u)$ for all $u\in\xtxty$.
\item [(ii)] $A^\theta(T)\leq \lambda A^\beta(T)$ for all $T\in\Abxxy$.
\item [(iii)] The inclusions $\mathcal{A}^\varepsilon (\xxx; Y)\subset \mathcal{A}^\beta (\xxx; Y) \subset \mathcal{A}^\pi (\xxx; Y)$ are continuous.
\item [(iv)] The inclusions $(\xtxty; \alpha^\pi)\subset (\xtxty; \alpha^\beta) \subset (\xtxty; \alpha^\varepsilon)$ are continuous.
\end{itemize}
\end{proposition}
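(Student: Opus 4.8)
The statement has four parts, and the key observation is that parts (i) and (ii) are the substantive ones, while (iii) and (iv) are immediate consequences applied to the pair $\varepsilon, \pi$, together with the fact that $\varepsilon$ and $\pi$ are themselves reasonable crossnorms and $\varepsilon \le \beta \le \pi$. So the plan is to prove (i) and (ii) directly from the uniform properties S3 and I3 respectively, by recognizing the rescaled identity map as an admissible morphism in the relevant category.

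For part (ii): I would fix $T \in \Abxxy$ and consider the identity map $\mathrm{Id} \colon \xpx \to \xpx$, viewed through the lens of $\Sigma$-operators. Precisely, let $R$ be the natural multilinear map $\zpz \to \sxx \subset \xtx$ with $Z_i = X_i$ (the identity on each coordinate); then $\rlin$ is the identity on the underlying vector space but regarded as a map $(\xtx, \theta) \to (\xtx, \beta)$. The hypothesis $\beta \le \lambda\theta$ gives exactly $\|\rlin\| \le \lambda$. The image of $\rlin$ is contained in $\sxx$, so $f_R$ is a $\Sigma$-$\beta$-$\theta$-operator. Applying I3 with $S = I_Y$ to the composition $I_Y f_T f_R$, whose associated multilinear operator is $T$ itself (now seen as a $\theta$-component entry), yields $A^\theta(T) \le \|\rlin\|\, \Ab(T)\, \|I_Y\| \le \lambda\, \Ab(T)$. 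The only point requiring a line of care is checking that the resulting operator is literally $T$ as an element of $\mathcal{A}^\theta(\xxx; Y)$ and that its $\Sigma$-operator is $f_T$; this follows because $f_R$ is a (the identity as a) bijection fixing $\sxx$ pointwise.

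For part (i): the argument is the mirror image using S3 rather than I3. Fix $u \in \xtxty$. Take the same $f_R \colon \szz^\theta \to \sxx^\beta$ as above — the identity on decomposable tensors, reasonable because $\beta \le \lambda\theta$ forces $\|\rlin\| \le \lambda$ — and $S = I_Y$. Then $f_R \tens S \colon (\ztz \tens Y, \alpha^\theta) \to (\xtxty, \ab)$ is, on the underlying space, the identity, and S3 gives $\|f_R \tens S\| \le \|\rlin\|\,\|S\| \le \lambda$; evaluating on $u$ gives $\alpha^\beta(u) \le \lambda\, \alpha^\theta(u)$. Here one must note that $\ztz \tens Y$ and $\xtxty$ are literally the same algebraic tensor product (since $Z_i = X_i$), so "the map is the identity" makes sense.

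Parts (iii) and (iv) then follow by specialization. Since every reasonable crossnorm satisfies $\varepsilon \le \beta \le \pi$ on $\xtx$, part (ii) with $\theta = \pi$, $\lambda = 1$ gives continuity of $\mathcal{A}^\varepsilon(\xxx;Y) \hookrightarrow$ intermediate $\hookrightarrow \mathcal{A}^\pi(\xxx;Y)$; and part (i) with the roles arranged so that the smaller crossnorm sits on the right gives the chain $(\xtxty; \alpha^\pi) \subset (\xtxty; \alpha^\beta) \subset (\xtxty; \alpha^\varepsilon)$, again with norm-one inclusions. The main obstacle, such as it is, is purely bookkeeping: being careful about which crossnorm plays the role of $\beta$ versus $\theta$ in applying S3 and I3 (the inequality $\beta \le \lambda\theta$ means $\theta$ is the "source" norm on the $\Sigma$-operator side), and confirming that the rescaled identity genuinely qualifies as a $\Sigma$-$\beta$-$\theta$-operator resp. a $\Sigma$-preserving data — both of which are immediate since it fixes the Segre cone. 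There is no analytic difficulty; the content is entirely in setting up the correct morphism and invoking uniformity.
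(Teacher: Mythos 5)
Your proof is correct and is evidently the intended argument: the paper states this proposition without proof, and the natural route is exactly yours, namely to view the identity on the Segre cone as a $\Sigma$-$\beta$-$\theta$-operator with $\|\rlin\|\leq\lambda$ (possible precisely because $\beta\leq\lambda\theta$) and invoke I3 for (ii) and S3 for (i), then specialize to $\varepsilon\leq\beta\leq\pi$ for (iii) and (iv). Your bookkeeping of which crossnorm plays the source role is also right.
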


\begin{corollary}
Let $\alpha$ be a $\Sigma$-tensor norm on spaces and $\ideal$ be a $\Sigma$-ideal. Let $\xxx$ and $Y$ be Banach spaces. If $\beta$ and $\theta$ are equivalent reasonable crossnorms on $\xtx$, then
\begin{itemize}
\item[(i)] The components $\Abxxy$ and $\mathcal{A}^\theta(\xxx;Y)$ are isomorphic.

\item[(ii)] The products $(\xtxty,\ab)$ and $(\xtxty,\alpha^\theta)$ are isomorphic.
\end{itemize}
\end{corollary}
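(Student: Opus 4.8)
The plan is to read both assertions off the preceding Proposition, using only that equivalence of the reasonable crossnorms $\beta$ and $\theta$ on $\xtx$ supplies constants $\lambda,\mu>0$ with $\beta\leq\lambda\theta$ and $\theta\leq\mu\beta$. In every case the isomorphism I will exhibit is the identity map on the underlying algebraic object (the tensor product $\xtxty$ in part (ii), the space of multilinear operators in part (i)), so the whole argument reduces to bounding that map and its inverse.

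I would dispatch part (ii) first. Applying part (i) of the Proposition to the pair $(\beta,\theta)$ and then to the pair $(\theta,\beta)$ yields $\alpha^\beta(u)\leq\lambda\,\alpha^\theta(u)$ and $\alpha^\theta(u)\leq\mu\,\alpha^\beta(u)$ for every $u\in\xtxty$. Hence the identity map $\mathrm{Id}\colon(\xtxty,\alpha^\theta)\to(\xtxty,\ab)$ is a bounded linear bijection of norm at most $\lambda$ whose inverse has norm at most $\mu$; that is exactly the assertion that the two normed spaces are isomorphic (and the isomorphism extends to their completions).

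For part (i) I would first check that the two components coincide as subsets of $\Lxxy$. Part (ii) of the Proposition, applied with $\beta\leq\lambda\theta$, gives $A^\theta(T)\leq\lambda\,\Ab(T)<\infty$ for every $T\in\Abxxy$, which forces $T\in\mathcal{A}^\theta(\xxx;Y)$; the reverse inclusion follows symmetrically from $\theta\leq\mu\beta$. With the sets identified, the same two inequalities show that $\mathrm{Id}\colon\Abxxy\to\mathcal{A}^\theta(\xxx;Y)$ is bounded of norm at most $\lambda$ with inverse of norm at most $\mu$, completing the proof.

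I do not expect a genuine obstacle here. The only point deserving a word of care is the set-theoretic identification of the components in part (i): the inequality $A^\theta(T)\leq\lambda\,\Ab(T)$ from the Proposition already presupposes that $T$ lies in the $\theta$-component, so one should state explicitly that this, together with its symmetric counterpart, yields $\Abxxy=\mathcal{A}^\theta(\xxx;Y)$ as sets. Beyond that, the argument is the standard observation that two norms on a fixed vector space that dominate one another up to constants are isomorphic via the identity.
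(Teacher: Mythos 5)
Your proof is correct and follows exactly the route the paper intends: the corollary is stated without proof as an immediate consequence of the preceding proposition, applied once with $\beta\leq\lambda\theta$ and once with $\theta\leq\mu\beta$, so that the identity map is the isomorphism in both parts. Your remark about the set-theoretic identification of the components in part (i) is a sensible clarification of what the proposition's inequality implicitly asserts, and nothing further is needed.
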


In some cases it is not necessary to consider the restriction of the reasonable crossnorm $\beta$.

\begin{corollary}
Let $\alpha$ be a finitely generated $\Sigma$-tensor norm on spaces and $\ideal$ be a maximal $\Sigma$-ideal. Let $(n, \xxx, Y, \beta)$ be and election in $\ban$ where $\beta$ is and injective tensor norm. Then
\begin{itemize}
\item  [(i)]  $\Ab(T)=\sup \{\Ab(Q_L f_T I_{\eee}) | E_i\in \mathcal{F}(X_i), F\in \mathcal{CF}(Y)\}$.
\item [(ii)]  $\ab(u; \xxx,Y)=\inf\{ \ab(u; \eee,F) | E_i\in \mathcal{F}(X_i), F\in \mathcal{F}(Y)\}$.
\end{itemize}
\end{corollary}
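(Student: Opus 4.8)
The plan is to derive both identities by unwinding Definitions~\ref{maximality} and~\ref{finitelygenerated}; the one substantive ingredient is the defining property of injective tensor norms, that they restrict isometrically to subspaces. So the first thing I would record is that, since $\beta$ is injective, for any finite dimensional subspaces $E_i\subseteq X_i$ the inclusion $(\ete,\br)\hookrightarrow(\xtx,\beta)$ is an isometry — for the canonical injective norm this is just the Hahn--Banach extension of functionals on the $E_i$ to functionals on the $X_i$ of the same norm (cf.~\cite{defant93}). Consequently the restricted crossnorm $\br$ on $\ete$ is intrinsic to $\ete$, independent of the ambient $\xtx$, so that the quantities $A^{\br}(\,\cdot\,)$ and $\abr(\,\cdot\,)$ appearing in those definitions are exactly what the symbols $\Ab(\,\cdot\,)$ and $\ab(\,\cdot\,)$ denote once the domain has shrunk to a finite dimensional tensor product. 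With this in hand, both assertions become verbatim restatements of the definitions.

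For (i), I would argue: since $\ideal$ is maximal, Definition~\ref{maximality} expresses $\Ab(T)$ as the supremum, over $E_i\in\mathcal{F}(X_i)$ and $L\in\mathcal{CF}(Y)$, of $A^{\br}(Q_L f_T I_{\eee}:\epe\into Y/L)$. For each admissible choice the domain of $Q_L f_T I_{\eee}$ is the finite dimensional tensor product $\ete$ carrying $\br$, which — by injectivity of $\beta$ — is the $\beta$-crossnorm of $\ete$; hence $A^{\br}(Q_L f_T I_{\eee})=\Ab(Q_L f_T I_{\eee})$ and (i) follows at once. Only maximality of $\ideal$ is used here, not the hypothesis on $\alpha$.

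For (ii), analogously: since $\alpha$ is finitely generated, Definition~\ref{finitelygenerated} writes $\ab(u;\xxx,Y)$ as the infimum of $\abr(u;\eee,F)$ over $E_i\in\mathcal{F}(X_i)$ and $F\in\mathcal{F}(Y)$ with $u\in\etetf$ (this last containment being implicit on the right-hand side of the statement). The same injectivity remark gives $\abr(u;\eee,F)=\ab(u;\eee,F)$ for every such choice, which is (ii). Only finite generation of $\alpha$ is used here.

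I do not expect a genuine obstacle. Beyond the isometric-restriction property of injective tensor norms — which is classical and purely local — everything is bookkeeping, and the content of the corollary is precisely that for an injective $\beta$ one may suppress the restriction bar in the local formulas of Definitions~\ref{maximality} and~\ref{finitelygenerated}. The only point deserving care is the identification made in the first paragraph, i.e. that $\Ab$ and $\ab$ applied to operators and tensors on finite dimensional subspaces are unambiguous exactly because $\beta$ is injective.
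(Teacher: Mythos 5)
Your proposal is correct and is exactly the intended argument: the paper states this corollary without proof, and the whole content is, as you say, that injectivity of $\beta$ makes the restricted norm $\br$ on $\ete$ coincide with the intrinsic $\beta$-norm, so that $A^{\br}=\Ab$ and $\abr=\ab$ on finite dimensional subproducts, after which (i) and (ii) are literally Definitions \ref{maximality} and \ref{finitelygenerated}. Your observation that (i) uses only maximality of $\ideal$ and (ii) only finite generation of $\alpha$ is also accurate.
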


The previous corollary is also true if we replace the Banach spaces $X_i$ by Hilbert spaces $H_i$ and $\beta$ by the reasonable crossnorm $\|\cdot\|_2$ (the norm which makes $H_1\tens\dots\tens H_n$ a (pre) Hilbert space, see \cite[Section 2.6]{kadison83} for details).

In every $\Sigma$-ideal $\ideal$ there exist cases where components coincide. Recall from \cite{pisier83} that Pisier prove the existence of Banach spaces $X$ such that $(X\hat{\tens} X,\varepsilon)$ is isomorphic to $(X\hat{\tens} X,\pi)$. For these spaces $X$ we have that every component $\mathcal{A}^\beta(X,X;Y)$ is isomorphic to $\mathcal{A}^\pi(X,X;Y)$ for all Banach space $Y$.

We finish the article presenting some consequences for the case $n=2$. According to Grothendieck there exist fourteen natural tensor norms, this is, tensor norms obtained from $\pi$ after the application of a sequence, in any order, of the operations: taking dual norm, injective associated, projective associated and transposing (see \cite[pp. 184]{ryan02}). In the following diagram we exhibit the consequence of this observation. In it, $\mathcal{A}^\beta$ stands for $\mathcal{A}^\beta(X_1,X_2;Y)$ and an arrow from $\mathcal{A}^\beta$ to $\mathcal{A}^\theta$ indicates the existence of a constant $\lambda$ such that $ A^\theta (T)\leq \lambda \Ab(T)$

\begin{equation*}
\begin{array}{c}
\xymatrix{
                                                                 &      \mathcal{A}^\pi &   \\
  \mathcal{A}^{\backslash(/\pi)}\ar[ur]             &         &   \mathcal{A}^{(\pi\backslash)/}\ar[ul]          \\
  \mathcal{A}^{/\pi}\ar[u]  & \mathcal{A}^{\backslash\varepsilon/}\ar[uu]\ar[ul]\ar[ur] &  \mathcal{A}^{\pi\backslash}\ar[u]          \\
  \mathcal{A}^{/(\backslash\varepsilon/)}\ar[u]\ar[ur]&         &   \mathcal{A}^{(\backslash\varepsilon/)\backslash}\ar[u]\ar[ul]\\
  \mathcal{A}^{\varepsilon/}\ar[u]&      \mathcal{A}^{/\pi\backslash}\ar[uu]\ar[ul]\ar[ur] &   \mathcal{A}^{\backslash\varepsilon}\ar[u]          \\
  \mathcal{A}^{(\varepsilon/)\backslash}\ar[u]\ar[ur]&         &   \mathcal{A}^{/(\backslash\varepsilon)}\ar[u]\ar[ul]          \\
                                                                 &      \mathcal{A}^\varepsilon\ar[ul]\ar[uu]\ar[ur] &  
  }
\end{array}.
\end{equation*}

%\bibliographystyle{plainurl}
%\bibliography{bibtensornorms}

\begin{thebibliography}{99}

\bibitem{achour16}
D.~Achour, P.~Rueda, E.A. S\'anchez-P\'erez, and R.~Yahi.
\newblock Lipschitz operator ideals and the approximation property.
\newblock {\em J. Math. Anal. Appl.}, 436(1):217 -- 236, 2016.
\newblock \href {http://dx.doi.org/https://doi.org/10.1016/j.jmaa.2015.11.050}
  {\path{doi:https://doi.org/10.1016/j.jmaa.2015.11.050}}.

\bibitem{angulo10}
J.~Angulo.
\newblock {\em Operadores multilineales $p$-sumantes}.
\newblock phdthesis, Centro de Investigaci\'on en Matem\'aticas, Guanajuato,
  M\'exico, 2010.
\newblock URL: \url{http://www.cimat.mx/Aplicaciones/biblioteca/TD/Archivos/TE
  337.pdf}.

\bibitem{angulo18}
J.~Angulo and M.~Fern\'andez-Unzueta.
\newblock A geometric approach to study $p$-summability in multilinear
  mappings.
\newblock {\em arXiv: 1805.02115v1}, 2018.
\newblock URL: \url{https://arxiv.org/abs/1805.02115}.

\bibitem{botelho15}
G.~Botelho and E.~R. Torres.
\newblock Hyper-ideals of multilinear operators.
\newblock {\em Linear Algebra App.}, 482:1 -- 20, 2015.
\newblock \href {http://dx.doi.org/https://doi.org/10.1016/j.laa.2015.05.012}
  {\path{doi:https://doi.org/10.1016/j.laa.2015.05.012}}.

\bibitem{cabrera-padilla15}
M.~G. Cabrera-Padilla, J.~A. Ch\'avez-Dom\'inguez, A.~Jim\'enez-Vargas, and
  M.~Villegas-Vallecillos.
\newblock Lipschitz tensor product.
\newblock {\em Khayyam J. Math.}, 1(2):185--218, 2015.
\newblock URL: \url{https://www.emis.de/journals/KJM/KJM_v1n2a5.pdf}.

\bibitem{cabrera-padilla16a}
M.~G. Cabrera-Padilla, J.~A. Ch\'avez-Dom\'inguez, A.~Jim\'enez-Vargas, and
  M.~Villegas-Vallecillos.
\newblock {Maximal Banach ideals of Lipschitz maps}.
\newblock {\em Ann. of Funct. Anal.}, 7(4):593--608, 11 2016.
\newblock \href {http://dx.doi.org/10.1215/20088752-3661620}
  {\path{doi:10.1215/20088752-3661620}}.

\bibitem{cabrera-padilla17}
M.~G. Cabrera-Padilla, J.~A. Ch\'avez-Dom\'inguez, A.~Jim\'enez-Vargas, and
  M.~Villegas-Vallecillos.
\newblock Duality for ideals of {L}ipschitz maps.
\newblock {\em Banach J. Math. Anal.}, 11(1):108--129, 01 2017.
\newblock \href {http://dx.doi.org/10.1215/17358787-3764290}
  {\path{doi:10.1215/17358787-3764290}}.

\bibitem{defant93}
A.~Defant and K.~Floret.
\newblock {\em Tensor norms and operator ideals}, volume 176 of {\em
  North-Holland Mathematics Studies}.
\newblock North-Holland Publishing Co., Amsterdam, 1993.

\bibitem{diestel95}
J.~Diestel, H.~Jarchow, and A.~Tongue.
\newblock {\em Absolutely Summing Operators}, volume~43 of {\em Cambridge
  Studies in Advaced Mathematics}.
\newblock Cambridge University Press, Cambridge, 1995.

\bibitem{fernandez-unzueta18}
M.~Fern\'andez-Unzueta.
\newblock {The Segre cone of Banach spaces and multilinear mappings}.
\newblock {\em Linear Multilinear Algebra}, 2018.
\newblock \href {http://dx.doi.org/10.1080/03081087.2018.1509938}
  {\path{doi:10.1080/03081087.2018.1509938}}.

\bibitem{fernandez-unzueta18a}
M.~Fern\'andez-Unzueta and S.~Garc\'ia-Hern\'andez.
\newblock $(p,q)$-dominated multilinear operators and {L}aprest\'e tensor
  norms.
\newblock {\em J. Math. Anal. Appl.}, 2018.
\newblock \href {http://dx.doi.org/https://doi.org/10.1016/j.jmaa.2018.10.044}
  {\path{doi:https://doi.org/10.1016/j.jmaa.2018.10.044}}.

\bibitem{fernandez-unzueta18b}
M.~Fern\'andez-Unzueta and S.~Garc\'ia-Hern\'andez.
\newblock Multilinear operators factoring through {H}ilbert spaces.
\newblock {\em To appear in Banach Journal of Mathematical Analysis},
  https://arxiv.org/abs/1805.09748.
\newblock URL: \url{https://projecteuclid.org/accepted/euclid.bjma.}

\bibitem{floret_hunfeld02}
K.~Floret and S.~Hunfeld.
\newblock {Ultrastability of ideals of homogeneous polynomials and multilinear
  mappings on Banach spaces}.
\newblock {\em Proc. Amer. Math. Soc.}, 130(5):1425--1435, 2002.
\newblock \href
  {http://dx.doi.org/https://doi.org/10.1090/S0002-9939-01-06228-1}
  {\path{doi:https://doi.org/10.1090/S0002-9939-01-06228-1}}.

\bibitem{grothendieck53}
A.~Grothendieck.
\newblock R{\'e}sum{\'e} de la th{\'e}orie m{\'e}trique des produits tensoriels
  topologiques.
\newblock {\em Bol. Soc. Mat. S\~{a}o Paulo}, 8:1--79, 1953.

\bibitem{jarchow07}
H.~Jarchow, C.~Palazuelos, D.~P\'erez-Garc\'ia, and I.~Villanueva.
\newblock {Hahn-Banach extension of multilinear forms and summability}.
\newblock {\em J. Math. Anal Appl.}, 336(2):1161--1177, 2007.
\newblock \href {http://dx.doi.org/https://doi.org/10.1016/j.jmaa.2007.03.057}
  {\path{doi:https://doi.org/10.1016/j.jmaa.2007.03.057}}.

\bibitem{kadison83}
R.~V. Kadison and J.~R. Ringrose.
\newblock {\em Fundamentals of the theory of operator algebras}, volume I:
  Elementary theory.
\newblock Acamdemic Press, 1983.

\bibitem{pietsch78}
A.~Pietsch.
\newblock {\em Operator ideals}, volume~16 of {\em Mathematische Monographien}.
\newblock Deutscher Verlag der Wissenschaften, 1978.

\bibitem{pietsch84}
A.~Pietsch.
\newblock Ideals of multilinear functionals.
\newblock In {\em Proceedings of the Second International Conferece on operator
  Algebras, Ideals and Their Applications in Theoretical Phisycs,
  Taubern-Texte. Math.}, volume~67, pages 185--199, 1984.

\bibitem{pisier83}
G.~Pisier.
\newblock {Counterexamples to a conjecture of Grothendieck}.
\newblock {\em Acta Math.}, 151:181--208, 1983.
\newblock URL: \url{https://doi.org/10.1007/BF02393206}, \href
  {http://dx.doi.org/10.1007/BF02393206} {\path{doi:10.1007/BF02393206}}.

\bibitem{pisier86}
G.~Pisier.
\newblock Factorization of linear operators and geometry of {B}anach spaces.
\newblock In {\em Conference Board of the Mathematical Sciences Regional
  Conference Series in Mathematics}, number~60, 1986.

\bibitem{ryan02}
R.A. Ryan.
\newblock {\em Introduction to tensor product of {B}anach spaces}.
\newblock Springer-Verlag London, 2002.
\newblock \href {http://dx.doi.org/10.1007/978-1-4471-3903-4}
  {\path{doi:10.1007/978-1-4471-3903-4}}.

\bibitem{tomczak89}
N.~Tomczak-Jaegermann.
\newblock {\em Banach-{M}azur distances and finite-dimensional operator
  ideals}, volume~38 of {\em Pitman Monographs and Surveys in Pure and Applied
  Mathematics}.
\newblock Longman Scientific \& Technical, Harlow; copublished in the United
  States with John Wiley \& Sons, Inc., New York, 1989.

\bibitem{wojtaszczyk91}
Wojtaszczyk.
\newblock {\em Banach Spaces for Analysts}.
\newblock Number~25 in Cambridge Studies in Advanced Mathematics. Cambridge
  University Press, 1991.


\end{thebibliography}

\bibliographystyle{plainurl}

\end{document}